\documentclass[reqno, 12pt]{amsart}
\usepackage{a4wide}
\usepackage[utf8]{inputenc}
\usepackage{amsfonts, amssymb, amsmath, amsthm, mathrsfs, amscd, enumerate, enumitem, setspace, url, color, graphicx, blindtext, scrextend}
\usepackage[all]{xy} 

\usepackage{hyperref}
\definecolor{darkred}{rgb}{0.5,0,0}
\definecolor{darkgreen}{rgb}{0,0.5,0}
\definecolor{darkblue}{rgb}{0,0,0.7}
\hypersetup{linkcolor=darkblue,filecolor=darkgreen,urlcolor=darkred,citecolor=darkblue}

\theoremstyle{plain}

\newtheorem{theorem}{Theorem}[section]
\newtheorem{lemma}[theorem]{Lemma}
\newtheorem{corollary}[theorem]{Corollary}

\newtheorem{proposition}[theorem]{Proposition}

\theoremstyle{definition}

\theoremstyle{remark}
\newtheorem{remark}[theorem]{Remark}

\newtheorem*{acknowledgements}{Acknowledgements}

\numberwithin{equation}{section}

\renewcommand{\(}{\left(}
\renewcommand{\)}{\right)}

\newcommand{\NN}{\mathbb{N}}
\newcommand{\ZZ}{\mathbb{Z}}
\newcommand{\QQ}{\mathbb{Q}}
\newcommand{\Zp}{\mathbb{Z}_p}
\newcommand{\Zq}{\mathbb{Z}_q}
\newcommand{\Qp}{\mathbb{Q}_p}

\newcommand{\RR}{\mathbb{R}}

\newcommand{\ve}{\varepsilon}

\renewcommand{\AA}{\mathbb{A}}
\newcommand{\PP}{\mathbb{P}}

\renewcommand{\P}{\mathcal{P}}
\newcommand{\Q}{\mathcal{Q}}

\newcommand{\A}{\mathbf{A}}
\newcommand{\x}{\mathbf{x}}

\newcommand{\C}{\mathbf{C}}
\newcommand{\X}{\mathbf{X}}

\DeclareMathOperator{\Br}{Br}
\DeclareMathOperator{\inv}{inv}
\DeclareMathOperator{\ev}{ev}
\DeclareMathOperator{\loc}{loc}

\DeclareMathOperator{\re}{Re}

\DeclareMathOperator{\val2}{val_2}

\DeclareMathOperator{\valp}{v_{\it p}}
\DeclareMathOperator{\disc}{disc}

\DeclareMathOperator{\He}{H}

\DeclareMathOperator{\Res}{Res}

\newcommand{\dmid}{\parallel}

\begin{document}
\title{Failures of the Integral Hasse Principle for Affine Quadric Surfaces}
\author{V. Mitankin}
\address{School of Mathematics \\
University of Bristol \\
Bristol \\
BS8 1TW \\
UK
}
\email{v.mitankin@bristol.ac.uk}
\date{April, 2016}
\thanks{2010  {\em Mathematics Subject Classification} 
	11D09 (primary), 11G99, 14G99 (secondary).
	}

\begin{abstract}
	Quadric hypersurfaces are well-known to satisfy the Hasse principle. However, this is no longer true in the case of the Hasse principle for integral points, where counter-examples are known to exist in dimension 1 and 2. This work explores the frequency that such counter-examples arise in a family of affine quadric surfaces defined over the integers.
\end{abstract}

\maketitle

\section{Introduction}
		
Given a polynomial $f \in \ZZ[x_1, \dots, x_s]$ and an integer $n$, Hilbert's 10th problem asks for an algorithm which determines if the Diophantine equation
\begin{equation} \label{maineq}
	f(x_1, \dots, x_s) 
	= 
	n
\end{equation} 	
has solution in integer $(x_1, \dots, x_s)$ or not. Although such an algorithm is known not to exist for general $f$ and $n$, we know that if $\X$ is the variety defined by the above equation over $\ZZ$ and $X = \X \times_\ZZ \QQ$, then a necessary condition for \eqref{maineq} to be soluble over $\ZZ$ is that
\begin{equation*} 
	\X(\A) 
	= 
	X(\RR) \times \prod_{p} \X(\Zp) \neq \emptyset,
\end{equation*}
where $\ZZ_p$ is the ring of $p$-adic integers. In general, it is easier to determine if $\X(\A)$ is non-empty and we will refer to it as the set of ad\`{e}lic points on $\X$. If $f$ and $n$ are of a special shape it might be the case that the existence of an ad\`{e}lic point is also a sufficient condition for the existence of an integer point on $\X$. Thus it seems natural to raise the question of whether $\X(\A) \neq \emptyset$ implies $\X(\ZZ) \neq \emptyset$? We say that $\X$ fails the integral Hasse principle if the answer to this question is negative. This subject is classical when $f$ is an integral quadratic form, which we will assume from now on. It is an old result that one can always effectively determine if $\X$ has an integer point when $f$ is definite. On the other hand, when $f$ is a non-singular indefinite quadratic form and $n$ is non-zero, the integral Hasse principle holds if $s \ge 4$ \cite[Thm~1.5, p 131]{C08}. However, the latter statement is false infinitely often when $s = 3$ \cite[p. 324, Example 1.2]{SPX04}. For example, let
\begin{equation} \label{IHP failure example}
	\X: \quad a^2 x^2 + b^{2m} y^2 - b z^2 = 1,
\end{equation}
with $a, b$ and $m$ positive integers such that $(a, b) = 1$, with $2 \mid a$ and $b \equiv 5$ mod $8$. Then $\X$ fails the integral Hasse principle (cf. \cite[\S 8.2]{CTX09}).
	
The purpose of the current paper is to estimate how frequently such counter-examples appear when $f$ ranges over diagonal indefinite ternary quadratic forms and to compare this amount to the number of surfaces in the family which have a non-empty set of ad\`{e}lic points. Our main tool is the integral Brauer--Manin obstruction, which is defined in $\S\ref{IBM}$ and which was introduced by Colliot-Th\'el\`ene and Xu \cite[\S 1]{CTX09}. They show that the integral Brauer--Manin obstruction is equivalent to the classical one given by the spinor genus of a quadratic form \cite[\S 7.2]{CTX09}. Moreover, in the case of $\X$ given by an indefinite ternary quadratic form $f$ it is shown to be the only obstruction to the integral Hasse principle \cite[\S 7.3]{CTX09}. 
	
In the last few years many questions of similar essence have been the object of investigation. In \cite{BB14a}, de~la~Bret\`{e}che and Browning have looked at Ch\^{a}telet surfaces over $\QQ$, obtaining an asymptotic formula for the density of such surfaces which fail the usual Hasse principle for rational points. The same has been done for families of coflasque tori which fail the Hasse principle \cite[Thm~1.1]{BB14b}. More recently, Jahnel and Schindler \cite[Thm~1.2]{JS15} have established an asymptotic formula for the number of del Pezzo surfaces of degree four in a Birch and Swinnerton-Dyer type family which fail the Hasse principle. Our work appears to be the first time that these questions have been addressed in the context of integral points on affine varieties.

Throughout the rest of the paper for given non-zero integers $a$, $b$, $c$, $n$, with $a$, $b$, $c$ not all the same sign, our primary objects of study are the quadric surfaces $\X_{a, b, c} \subseteq \AA^3$ defined by
\begin{equation} \label{X}
	\X_{a, b, c}: \quad a x^2 + b y^2 + c z^2 = n.
\end{equation}
For any fixed non-zero $n \in \ZZ$ let $\mathcal{F}_n$ be the family of quadric surfaces $\X_{a, b, c}$ defined as in \eqref{X}. We order the surfaces in $\mathcal{F}_n$ with respect to the usual height function which is given by $\He(\X_{a, b, c}) = \max\{|a|, |b|, |c|\}$.

Firstly, we would like to study how many surfaces in the family have points everywhere locally. For any real $B \ge 1$ we introduce a quantity which measures the frequency that such surfaces of height not exceeding $B$ appear when we range over the family $\mathcal{F}_n$. It is given by
\begin{equation*}
	N_{\loc}(B)
	= 
	\# \left\{ \X_{a, b, c} \in \mathcal{F}_n \ : \ \He(\X_{a, b, c}) \le B \mbox{ and } \X_{a, b, c}(\A) \neq \emptyset \right\}.
\end{equation*}
Our first result confirms that a positive proportion of surfaces in the family $\mathcal{F}_n$ have points everywhere locally and provides an explicit interpretation of their density.

\begin{theorem} \label{Th0}
	For any non-zero $n \in \ZZ$ the limit $\lim_{B \to \infty} B^{-3} N_{\loc}(B)$ exists, it is non-zero and it is given as a product of local densities 
	\begin{equation*}
		\lim_{B \to \infty}\frac{N_{\loc}(B)}{B^3} 
		= 
		\sigma_{\infty}
		\prod_{p} \sigma_p,
	\end{equation*}
	where $\sigma_{\infty}$ and $\sigma_p$ are defined in \eqref{Omega} and \eqref{localDensDef}. Furthermore, $\sigma_{\infty} = 1$ and for any prime $p \nmid n$ we have
	\begin{equation*}
 		\sigma_p 
 		= 
 		\begin{cases}
 			1 - \frac{3}{2}p^{-2} + \frac{1}{2}p^{-3} 
 			&\mbox{if $p > 2$}, 
 			\\
 			\frac{357}{512} 
 			&\mbox{if $p = 2$}.  
 		\end{cases}
	\end{equation*}
\end{theorem}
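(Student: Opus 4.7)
The plan is to establish the asymptotic by a sieve on the local solubility conditions, exhibiting $N_{\loc}(B)$ as a lattice-point count governed by densities at each place. Since, by definition, the triples $(a,b,c) \in \mathcal{F}_n$ have $a,b,c$ not all of the same sign, the form $ax^2 + by^2 + cz^2$ is indefinite and represents every real number, so $\X_{a,b,c}(\RR) \neq \emptyset$ is automatic, which forces $\sigma_\infty = 1$. The remaining work is to compute the $p$-adic densities and to sieve.

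At each prime $p$, local solubility $\X_{a,b,c}(\Zp) \neq \emptyset$ is a congruence condition on $(a,b,c)$ modulo some bounded power of $p$, so the density $\sigma_p$ is well-defined via Haar measure on $\Zp^3$. For odd $p$ with $p \nmid n$, Hensel's lemma reduces matters to solvability modulo $p$, and I would perform a case split according to how many coordinates of $(a,b,c)$ lie in $p\Zp$. If none, the mod-$p$ reduction is a smooth ternary quadric representing every element. If exactly one, the residual form is a non-degenerate binary quadratic form over $\Fp$ which represents every non-zero element, and the lift is smooth since $n \not\equiv 0 \pmod p$. If exactly two, the condition reduces to a single square-class condition on the remaining coordinate, satisfied with density $1/2$. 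If all three, solubility fails because the left-hand side lies in $p\Zp$ while $n$ is a $p$-adic unit. Weighting by the Haar measures $\binom{3}{k}(1-p^{-1})^{3-k}p^{-k}$ and summing yields $\sigma_p = 1 - \tfrac{3}{2}p^{-2} + \tfrac{1}{2}p^{-3}$. At $p = 2$ with $n$ odd the same case split applies but must be resolved modulo $8$, since squares in $\ZZ_2^*$ are characterised by residues mod $8$; a direct (but lengthier) enumeration then produces $\sigma_2 = 357/512$.

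To pass from local densities to the global asymptotic I would invoke a standard Ekedahl-type sieve. For any parameter $Y$, the Chinese remainder theorem expresses the number of $(a,b,c) \in [-B,B]^3$ satisfying the local conditions at all $p \le Y$ as $(2B)^3 \prod_{p \le Y} \sigma_p + O_Y(B^2)$, while the tail is controlled by observing that a local obstruction at $p > Y$ forces $p$ to divide at least two of $a,b,c$, giving a contribution $O(B^3/p^2)$ per such prime. Since $\sum_p p^{-2}$ converges, the tail beyond $Y$ is $O(B^3/Y)$, and sending $B \to \infty$ first and then $Y \to \infty$ recovers the Euler product $\prod_p \sigma_p$ as the leading constant.

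The main obstacle is the uniform tail bound in this sieve: one needs a clean structural statement that for large $p$ a local obstruction is a codimension-two event, namely the simultaneous divisibility of at least two of $a,b,c$ by $p$, and then one must pass from this to the uniform bound $O(B^3/p^2)$ on the number of obstructed triples in $[-B,B]^3$. This is precisely where the $p^{-2}$ saving in $1 - \sigma_p$ is exploited, and it is only just sufficient for absolute convergence of $\prod_p \sigma_p$; care is therefore needed to keep the error terms uniform in $Y$ while $B$ is taken to infinity.
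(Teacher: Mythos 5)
Your proposal follows essentially the same route as the paper. For odd $p\nmid n$ the case split on how many of $a,b,c$ lie in $p\Zp$, resolved by Hensel's lemma, is exactly the paper's computation of $\kappa_p^c(0)$ and yields the same value $1-\tfrac{3}{2}p^{-2}+\tfrac{1}{2}p^{-3}$; the mod~$8$ enumeration at $p=2$ is likewise what the paper does. The local-to-global passage is also the same mechanism: the paper verifies the hypotheses of \cite[Lemma~3.1]{BBL16} (positive measure, boundaries of measure zero, and the tail bound on the quantity $R(B)$), and the decisive input there is precisely your observation that for $p\nmid 2n$ a local obstruction forces $p$ to divide at least two of the coefficients, giving $O(B^3/p^2)$ obstructed triples per prime and hence a tail $O(B^3/M)$. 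Running the Ekedahl sieve by hand rather than citing that lemma is a cosmetic difference.

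There is, however, one genuine omission. The theorem asserts the limit is \emph{non-zero}, and your argument says nothing about the finitely many primes $p\mid 2n$, where the naive case split breaks down: when $p\mid n$, solubility of $ax^2+by^2+cz^2=n$ over $\Zp$ is not decided modulo $p$, and one must either descend through the valuations $0\le j\le \valp(n)$ (the paper's decomposition $\sigma_p=\sum_{j} p^{-3j}\kappa_p(j)$) or at least exhibit a positive-measure set of soluble coefficient triples. For odd $p\mid n$ this is easy to patch (take $a,b,c$ all units: the reduction has a smooth $\Fp$-point and Hensel lifts), but at $p=2$ with $2\mid n$ a genuine argument is needed (think of the sum of three squares and $n\equiv 7 \bmod 8$ after removing powers of $4$); the paper disposes of all bad primes at once by noting that the $j=\valp(n)$ term of its decomposition is already positive. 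Without some such statement the ``non-zero'' clause is unproved. A smaller bookkeeping point: your main term $(2B)^3\prod_{p\le Y}\sigma_p$ ignores the restriction that $a,b,c$ are not all of the same sign (only $6$ of the $8$ octants occur in $\mathcal{F}_n$); this is exactly what the normalisation $\sigma_\infty=\mu_\infty(\Omega_\infty)/\mu_\infty(I)$ in \eqref{localDensDef} is meant to absorb, and the constant needs to be tracked when matching the sieve output to the claimed limit.
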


Bhargava, Cremona, Fisher, Jones and Keating \cite{BCFJK16} have an analogous result for homogeneous quadric hypersurfaces of any dimension. They show that the density of such hypersurfaces of fixed dimension which have a $\ZZ$-point is given as a product of local densities and they compute these densities explicitly with respect to the Gaussian Orthogonal Ensemble distribution. In our setting, the density depends intimately on the value of $n$. A method for computing $\sigma_p$ for $p \mid n$ is given in $\S\ref{LD}$. 
	
Our next result is an upper bound for the number of surfaces in the family $\mathcal{F}_n$ which admit a Brauer--Manin obstruction to the integral Hasse principle. To do so, for any real $B \ge 1$, we define
\begin{equation*} 
	N_{\Br}(B) 
	= 
	\# \left\{ \X_{a, b, c} \in \mathcal{F}_n \ : \ \He(\X_{a, b, c}) \le B,\ \X_{a, b, c}(\A) \neq \emptyset \ \mbox{and} \ \X_{a, b, c}(\ZZ) = \emptyset \right\}.
\end{equation*}
Then we have the following result.

\begin{theorem}\label{Th1}
	For any non-zero $n \in \ZZ$ we have
	\begin{equation*}
		N_{\Br}(B) 
		\ll_n 
		B^{\frac{3}{2}} \( \log B \)^3.
	\end{equation*}
\end{theorem}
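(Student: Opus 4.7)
My plan is to use the Colliot-Th\'el\`ene--Xu result cited above, namely that the Brauer--Manin obstruction is the only obstruction to the integral Hasse principle for indefinite ternary quadrics, and thereby reduce the bound on $N_{\Br}(B)$ to counting triples $(a,b,c)$ for which such an obstruction actually arises. Concretely, I would first invoke an explicit description of $\Br(\X_{a,b,c})/\Br(\QQ)$ (presumably worked out in $\S\ref{IBM}$): it is generated by finitely many quaternion algebra classes $\alpha_k=(k,g_k)$ where $k$ runs over squareclasses of divisors of $4nabc$ and $g_k$ is an explicit linear function of $ax^2, by^2, cz^2, n$. Every triple in $N_{\Br}(B)$ supports at least one such $\alpha_k$ whose sum of local invariants is a fixed non-zero element of $\QQ/\ZZ$ on $\X_{a,b,c}(\A)$.

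The heart of the argument is to translate the existence of such a $\alpha_k$ into explicit arithmetic constraints on $(a,b,c)$. I would proceed prime-by-prime: at every $p\nmid 2nk\cdot abc$ the invariant $\inv_p\alpha_k$ vanishes identically on $\X(\ZZ_p)$; at primes $p\mid abc$ with $p\nmid 2nk$ the requirement that $\inv_p\alpha_k$ be constant on $\X(\ZZ_p)$ is equivalent, via Hensel's lemma and the explicit quaternion symbol, to a Hilbert symbol identity of the shape $(k,\diamond)_p=0$ where $\diamond$ is the coefficient of $a,b,c$ divisible by $p$. Since $k$ varies through a set depending only on $n$, this forces each of $a$, $b$, $c$ individually to be a square in $\ZZ_p$ for all but finitely many $p$, which globally is equivalent to
\begin{equation*}
a=\delta_1 u^2,\qquad b=\delta_2 v^2,\qquad c=\delta_3 w^2,
\end{equation*}
for some triple $(\delta_1,\delta_2,\delta_3)$ drawn from a finite, $n$-dependent set $\Sigma_n$ of squarefree integers (built from divisors of $2n$).

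For each such $(\delta_1,\delta_2,\delta_3)$ the height condition gives $|u|,|v|,|w|\ll B^{1/2}$, yielding $\ll_n B^{3/2}$ contributing triples; permitting the $\delta_i$ to range over squarefree divisors of a fixed integer $M_n$ in order to absorb bad primes dividing $2n$, weighted by $\tau(\cdot)$, contributes the factor $(\log B)^3$ through the standard estimate $\sum_{\delta\le B}\tau(\delta)/\delta^{1/2}\ll B^{1/2}\log B$ applied three times. The principal obstacle is the intermediate step: the case analysis at primes $p$ dividing precisely one, two, or three of $a,b,c$ is delicate, since the residue of $\alpha_k$ depends both on $v_p(k)$ and on which of $a,b,c$ are divisible by $p$, and the conclusion that each coefficient individually (not merely a product of them) is nearly square relies on simultaneously combining the locally-constant-invariant constraints coming from several distinct classes $\alpha_k$.
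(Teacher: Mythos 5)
Your overall strategy---derive from the local invariant maps a necessary arithmetic condition on $(a,b,c)$ for an obstruction, then count the triples satisfying it---is the same as the paper's, but the condition you extract is wrong, and this is a genuine gap rather than a technicality. You claim the local analysis forces each coefficient individually to be $\delta_i$ times a perfect square with $\delta_i$ drawn from a finite $n$-dependent set. The family of Theorem~\ref{Th2} refutes this: the surfaces $9ax^2-3ay^2+16c^2z^2=1$ with $a$ squarefree give $\gg B^{3/2}(\log B)^{-1/2}$ genuinely obstructed surfaces whose first two coefficients are squarefree up to the bounded factors $9$ and $3$, hence not of your claimed shape once $a$ is large. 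The step that fails is at primes $p\nmid 2n$ dividing exactly two of the coefficients: if, say, $p\dmid a$ and $p\dmid b$ with $p\nmid cn$, then $v_p(-abcn)=2$ is even, so $-abcn$ may well be a square in $\Qp$, in which case the generator $(\rho,-abcn)$ is locally trivial at $p$ and no condition on $a$ or $b$ individually arises. The correct necessary condition (Proposition~\ref{Proposition} of the paper) is only that every odd prime $p\nmid n$ dividing one coefficient to an odd power must divide the product of the other two.

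This weaker condition leads to the parametrisation
\begin{equation*}
a=v_1u_{12}u_{13}a_1^2,\qquad b=v_2u_{12}u_{23}b_1^2,\qquad c=v_3u_{13}u_{23}c_1^2,
\end{equation*}
with the $v_i$ supported on primes dividing $2n$ but the shared factors $u_{ij}$ unbounded; it is the three sums $\sum_{u_{ij}\le B}u_{ij}^{-1}\ll\log B$ over these shared factors, not a $\tau$-weighted sum over divisors of a fixed modulus as you propose, that produce the $(\log B)^3$. Your accounting of the logarithms is therefore also off: with $\delta_i$ ranging over divisors of a fixed $M_n$ your count would be $O_n(B^{3/2})$ with no logarithmic factors, but it would be an upper bound for the wrong set---one that omits most of the actual counterexamples to the integral Hasse principle.
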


It is worth emphasising that the number of surfaces in $\mathcal{F}_n$ with non-trivial Brauer group is of order $B^3$ (see \cite[\S 5.6]{CTX09}). Theorem~\ref{Th1} therefore shows that although most of the time there is a non-trivial Brauer group, the existence of a Brauer--Manin obstruction to the integral Hasse principle is quite a rare phenomenon. Our proof is based on ideas found in the work of Colliot-Th\'{e}l\`{e}ne, Kanevsky and Sansuc \cite{CKS87}, as developed further by Bright, Browning and Loughran \cite[\S 6]{BBL16}. In fact, a minor modification of our argument is enough to show that there are just $O(B^{\frac{3}{2}} \( \log B \)^3)$ surfaces in the family which satisfy strong approximation away from $\infty$. Moreover, if we compare Theorem~\ref{Th1} with Theorem~\ref{Th0} we see that $100\%$ of the surfaces in $\mathcal{F}_n$ satisfy the integral Hasse principle (and  yet fail strong approximation away from $\infty$).
	
It is a challenging question to understand the actual asymptotic behaviour of $N_{\Br}(B)$ when $B$ tends to infinity. With our technique, in order to verify for which quadric surfaces $\X_{a, b, c} \in \mathcal{F}_n$ the Brauer--Manin set is empty we rely on finding an explicit uniform generator for $\Br X_{a, b, c} / \Br \QQ$ across the whole family. An algorithm for finding an explicit generator of $\Br X_{a, b, c} / \Br \QQ$ is found in \cite[\S 5.8]{CTX09}, provided we know a $\QQ$-rational point on $X_{a, b, c}$. Although the Hasse--Minkowski theorem guarantees such a rational point exists for each surface in $\mathcal{F}_n$ with non-empty set of ad\`{e}les, recent work of Uematsu \cite{U16} shows that a uniform generator for the family $\mathcal{F}_n$ does not exist. 
	
The example considered in \eqref{IHP failure example} shows that $N_{\Br}(B) \gg B$ when $n = 1$. Our next goal is to provide a lower bound for $N_{\Br}(B)$ which shows that the upper bound in Theorem~\ref{Th1} is not far from the truth. For $a$, $b$, $c \in \NN$ not divisible by $3$ we consider the subfamily $\mathcal{F}_1'$ of $\mathcal{F}_1$ given by the surfaces
\begin{equation} \label{X'}
	\X'_{a, b, c}: \quad 9a x^2 - 3b y^2 + 16c^2 z^2 = 1,
\end{equation}
with $a$, $b$ squarefree, $a \equiv 1$ mod $8$ and $(ab, 2c) = 1$. For this family $(0, 0, \pm 1/4c) \in \X'_{a, b, c}(\QQ)$.  For any real $B \ge 1$ we define
\begin{equation*}
	N'_{\Br}(B)
	= 
	\# \left\{ \X'_{a, b, c} \in \mathcal{F}_1' \ : \ \He(\X'_{a, b, c}) \le B, \ \X'_{a, b, c}(\A) \neq \emptyset \mbox{ and } \X'_{a, b, c}(\ZZ) = \emptyset \right\}. 
\end{equation*}
Then an asymptotic formula for $N'_{\Br}(B)$ can be given.

\begin{theorem} \label{Th2}
	We have
	\begin{equation*}
		N'_{\Br}(B)
		=
		C \frac{B^\frac{3}{2}}{\(\log B\)^{\frac{1}{2}}}
		+
 		O\( \frac{B^\frac{3}{2} \( \log\log B \)^{\frac{3}{2}}}{\log B}\).
	\end{equation*} 
	Here, if $\chi$ is the extension of the quadratic character $\( \frac{3}{\ast}\)$ mod $24$, then
	\begin{equation*}
		C
		=
		\frac{1}{270 \pi^\frac{1}{2}}
		\prod_{p}\( 1 + \frac{1}{2p} \)\( 1 - \frac{1}{p} \)^{1/2} \( 1 + \frac{\chi(p)}{2p + 1} \) \(1 - \frac{1 + \chi(p)}{p(2p + 1 + \chi(p))} \).
	\end{equation*}
\end{theorem}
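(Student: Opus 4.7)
The strategy is to identify the integral Brauer--Manin obstruction explicitly across the entire subfamily $\mathcal{F}_1'$ and then to count the resulting condition by a Selberg--Delange type argument. Since $(0, 0, 1/4c) \in \X'_{a,b,c}(\QQ)$ for every triple in the family, the algorithm of \cite[\S 5.8]{CTX09} applies uniformly: the factorisation
\[(1-4cz)(1+4cz) = 9ax^2 - 3by^2\]
displays the right-hand side as $9a$ times a norm from $\QQ(\sqrt{3ab})$, so a quaternion class (modulo $\Br\QQ$) of shape $\alpha_{a,b,c} = \bigl(3ab,\, 1 - 4cz\bigr)$ should generate $\Br \X'_{a,b,c}/\Br \QQ$ uniformly in $(a,b,c)$.

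I would then analyse $\X'_{a,b,c}(\Zp)$ and $\inv_p \alpha_{a,b,c}$ prime by prime. Away from $p \in \{2, 3\}$ and from the prime divisors of $abc$, the surface is smooth over $\Zp$ and the invariant vanishes. At the remaining primes, Hensel's lemma combined with the hypotheses $a \equiv 1 \pmod 8$, $3 \nmid abc$, $(ab, 2c) = 1$ and $a, b$ squarefree reduces each local invariant to a Hilbert symbol that simplifies to a Legendre symbol in one of $3, a, b, c$. Summing these via the reciprocity law, the emptiness of $\X'_{a,b,c}(\A)^{\Br}$ amounts to local solubility everywhere (which is itself a finite list of congruence conditions on $a, b, c$) together with a multiplicative parity condition on the prime factorisations of $a$ and $b$. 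The character $\chi$ of the theorem, the extension of $\bigl(\tfrac{3}{\cdot}\bigr)$ modulo $24$, is the one that governs the splitting of primes in $\QQ(\sqrt{3})$ and should emerge naturally from this Hilbert symbol reduction.

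For fixed $c$ with $16 c^2 \le B$ satisfying the local constraints, the remaining sum over admissible pairs $(a, b)$ with $9a, 3b \le B$ is a two-variable sum of a jointly multiplicative function supported on coprime squarefree integers. Opening the Legendre symbols and reorganising casts the inner sum into the shape
\[\sum_{\substack{n \le Y \\ \mu^2(n) = 1,\ (n, M) = 1}} f_\chi(n),\]
where $f_\chi$ is multiplicative and its Dirichlet series factors as $\zeta(s)^{1/2} L(s, \chi)^{\pm 1/2}$ times a function that is holomorphic and non-vanishing in a neighbourhood of $\re(s) = 1$. The Selberg--Delange method then yields an asymptotic of shape $\kappa\, Y /(\log Y)^{1/2} + O\bigl(Y (\log \log Y)^{3/2}/\log Y\bigr)$ uniform in the relevant modulus $M$; the factor $\pi^{-1/2}$ in $C$ is the usual $\Gamma(1/2)$ arising in the Tauberian extraction. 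Summing over $c$ and reassembling the local densities prime by prime delivers the main term $C B^{3/2}/(\log B)^{1/2}$ together with the Euler product announced for $C$.

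The most delicate step will be the uniform Hilbert symbol computation at the awkward primes $p = 2$ and $p = 3$: the hypotheses $a \equiv 1 \pmod 8$ and $3 \nmid abc$ are exactly what makes $\inv_p \alpha_{a,b,c}(P_p)$ well defined and independent of the chosen local point $P_p$, and what guarantees that the obstruction condition is multiplicative in the prime factorisations of $a$ and $b$. A secondary but nontrivial challenge is to propagate the Selberg--Delange error term $O(Y(\log\log Y)^{3/2}/\log Y)$ uniformly through the outer summation over $c$ and through the opening of the Legendre symbols, while organising the local factors so as to produce exactly the Euler product for $C$ claimed in the theorem.
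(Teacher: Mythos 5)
Your setup is right in outline --- the paper also works with the uniform generator $(1+4cz,\,3ab)$ attached to the rational point $(0,0,\pm 1/4c)$ and evaluates it prime by prime --- but the outcome you expect from the local analysis is wrong, and this derails the count. For integral points the decisive question at each prime $p$ is whether $\inv_p$ of the generator is \emph{constant} on $\X'_{a,b,c}(\Zp)$: if the evaluation map is surjective onto $\{0,1/2\}$ at even a single prime, there is no obstruction at all (Remark~\ref{SurjRemark}), so there is no ``summing via the reciprocity law'' of genuinely varying local terms as in the rational-point setting. In particular, Proposition~\ref{Proposition} shows that any odd prime dividing exactly one of $a$, $b$ (necessarily to the first power, as $a,b$ are squarefree and coprime to $6$, and coprime to $c$) already forces surjectivity and hence kills the obstruction. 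Consequently an obstructed surface must have $a=b$; the remaining conditions are $a\equiv 1 \pmod{24}$ and $\left(\tfrac{3}{p}\right)=1$ for every $p\mid a$, with the only nonzero local invariant sitting at $p=3$. Your proposal instead predicts a ``multiplicative parity condition'' on pairs $(a,b)$ and sets up a two-variable sum ``supported on coprime squarefree integers''. That is incompatible with $a=b$ (coprimality would leave only $a=b=1$), and if taken at face value it produces a count of order $B^{2}$ in $(a,b)$ times $B^{1/2}$ in $c$, i.e.\ $B^{5/2}$ up to logarithms, rather than the $B^{3/2}/(\log B)^{1/2}$ of the theorem. The collapse of the pair $(a,b)$ onto the diagonal is the essential structural input, and it is exactly what your sketch is missing.

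Once that reduction is in place, the analytic half of your plan is broadly sound: the count becomes a one-variable sum over squarefree $u=a=b\equiv 1\pmod{24}$ weighted by $\prod_{p\mid u}\tfrac12\bigl(1+\left(\tfrac{3}{p}\right)\bigr)$, multiplied by $\#\{v\le \sqrt{B}/4 : (v,3u)=1\}$, and a Selberg--Delange/Wirsing-type mean value theorem does yield the $\Gamma(1/2)=\pi^{1/2}$ and the stated error term. The paper handles the coupling $(v,3u)=1$ by M\"obius inversion, a splitting $u=u'u''$, the mean value theorem of Friedlander--Iwaniec, and a separate contour-integration lemma for the resulting $d$-sum; your uniformity concerns there are legitimate but secondary. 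The genuine gap is the identification of the obstruction condition, not the Tauberian extraction.
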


It is clear that if we multiply the left and right hand side of \eqref{X'} by a non-zero $n \in \ZZ$ we get an asymptotic formula for the number of surfaces in a subfamily of $\mathcal{F}_n$ which fail the integral Hasse principle. Therefore, we have the following result which shows that the upper bound in Theorem~\ref{Th1} is optimal up to $(\log B)^{\frac{7}{2}} $.

\begin{corollary}
	For every non-zero $n \in \ZZ$ we have
	\begin{equation*}
		N_{\Br}(B) 
		\gg_n 
		\frac{B^{\frac{3}{2}}}{\(\log B \)^{\frac{1}{2}}}.
	\end{equation*}
\end{corollary}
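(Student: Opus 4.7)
The plan is to exploit the transformation suggested in the paragraph preceding the corollary: given a surface $\X'_{a,b,c} \in \mathcal{F}'_1$, multiplying both sides of \eqref{X'} by $n$ yields the surface $\X_{9an,\,-3bn,\,16c^2 n} \in \mathcal{F}_n$. I would first verify that this map sends $\mathcal{F}'_1$ into $\mathcal{F}_n$: the coefficients $9an$, $-3bn$, $16c^2n$ are all non-zero and not all of the same sign (since $a,b,c>0$ and $n \neq 0$, the middle coefficient has sign opposite to the other two).

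Next, since multiplying the defining equation by the non-zero integer $n$ does not change the set of integer tuples or the set of $\Zp$-tuples (respectively $\RR$-tuples) satisfying it, one has the identities
\begin{equation*}
\X_{9an,\,-3bn,\,16c^2n}(\ZZ) = \X'_{a,b,c}(\ZZ), \qquad \X_{9an,\,-3bn,\,16c^2n}(\A) = \X'_{a,b,c}(\A).
\end{equation*}
Hence $\X'_{a,b,c}$ fails the integral Hasse principle if and only if $\X_{9an,\,-3bn,\,16c^2n}$ does. Moreover, the map $(a,b,c) \mapsto (9an,-3bn,16c^2n)$ is clearly injective on triples, and the height satisfies
\begin{equation*}
\He(\X_{9an,\,-3bn,\,16c^2n}) = |n| \cdot \He(\X'_{a,b,c}) \le |n| B
\end{equation*}
whenever $\He(\X'_{a,b,c}) \le B$.

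Putting these ingredients together gives the inequality $N_{\Br}(|n|B) \ge N'_{\Br}(B)$ for every $B \ge 1$. Applying Theorem~\ref{Th2} to the right-hand side yields $N_{\Br}(|n|B) \gg B^{3/2}/(\log B)^{1/2}$, and a change of variable $B \mapsto B/|n|$ produces the desired bound $N_{\Br}(B) \gg_n B^{3/2}/(\log B)^{1/2}$, with the implied constant absorbing the factor $|n|^{-3/2}$ and the logarithmic shift.

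No real obstacle arises here — the argument is essentially a direct deduction from Theorem~\ref{Th2}. The only point requiring (brief) care is checking that multiplication by $n$ really preserves local solubility at all places, which is immediate because the two equations define literally the same subset of $\AA^3(\Zp)$ (respectively $\AA^3(\RR)$) for each $p$.
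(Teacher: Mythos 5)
Your argument is correct and is essentially the paper's own proof: the paper derives the corollary by exactly this observation, that multiplying \eqref{X'} through by $n$ embeds $\mathcal{F}_1'$ into $\mathcal{F}_n$ without changing integral or ad\`elic solubility, so Theorem~\ref{Th2} transfers directly. Your write-up merely makes explicit the routine checks (sign condition, injectivity, height rescaling) that the paper leaves implicit.
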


This paper is organised as follows. In $\S\ref{IBM}$ we give an overview of the integral Brauer--Manin obstruction and present the way we are going to study surfaces in the families we are interested in. We end $\S\ref{IBM}$ with the statement of Proposition~\ref{Proposition} which plays a key role in the proof of Theorem~\ref{Th1}. Section $\ref{LD}$ consists of the proof of Theorem~\ref{Th0}. In $\S\ref{UB}$ we prove Proposition~\ref{Proposition} and Theorem~\ref{Th1}. Finally, $\S\ref{AF}$ is dedicated to the proof of Theorem~\ref{Th2}.
	
\begin{acknowledgements} 
	The author would like to express his deepest gratitude to Tim Browning for suggesting two of these problems and for the nearly constant support both in the mathematical sense and in improving the script. The author is also very grateful to Damaris Schindler for suggesting the problem in Theorem~\ref{Th0} and to Dan Loughran and Damaris Schindler for many helpful suggestions. The author would like to thank Jean-Louis Colliot-Th\'{e}l\`{e}ne, Roger Heath-Brown, Adam Morgan, Efthymios Sofos and Vinay Viswanathan for helpful discussions and remarks. The author would also like to thank the anonymous referee for giving alternative short proof of Proposition~\ref{Proposition} and for the useful comments and suggestions. While working on this paper the author was
	supported by ERC grant \texttt{306457}.
\end{acknowledgements}

\section{The integral Brauer--Manin obstruction}\label{IBM}

In our setting $\X$ will always be a separated scheme of finite type over $\ZZ$. For this reason and in order to simplify the exposition we shall explain the construction of the integral Brauer--Manin obstruction in this setting only. This is done as a summary of the construction that was originally developed by Colliot-Th\'el\`ene and Xu in \cite[\S 1]{CTX09}. Let $X = \X \times_\ZZ \QQ$. The standard notation $\Qp$ will be used for the field of $p$-adic numbers and we shall set $\QQ_\infty = \RR$. For each finite prime $\Zp$ denotes the ring of $p$-adic integers and at $p = \infty$ we adopt the convention that $\ZZ_\infty = \RR$.

For each element $\alpha$ of the Brauer group $\Br X$ of $X$ there is a map $\ev_{\alpha, p} : X(\Qp) \longrightarrow  \Br \Qp$ which evaluates this element at a given $p$-adic point on $X$ and for each prime $p$ including the infinite one there is an embedding $\inv_p : \Br \Qp \hookrightarrow \QQ / \ZZ$. Thus we get the composition of maps
\begin{equation} \label{eq:ev composed with inv}
	X(\Qp) 
	\xrightarrow{\ev_{\alpha, p}}  
	\Br \Qp 
	\xrightarrow{\inv_p} 
	\QQ / \ZZ.
\end{equation}
An ad\`{e}le on the $\QQ$-variety $X$ is a point $\{ \x_p \} \in \prod_{p \le \infty} X(\Qp)$ such that, for all but finitely many primes $\x_p$ belongs to $\X(\Zp)$. The set of ad\`{e}les of $X$ will be denoted by $X (\A_\QQ)$. The set of rational points on $X$ can be diagonally embedded in $X(\A_\QQ)$ and we will use $X(\QQ)$ also for the image of $X(\QQ)$ in $X(\A_\QQ)$ under this embedding. There exists a natural pairing, the Brauer--Manin pairing, between the set of ad\`{e}les $X (\A_\QQ)$ and the Brauer group $\Br X$ of $X$. This pairing is given by
\begin{equation*}
	\begin{split}
		X(\A_\QQ) \times \Br X 
		&\rightarrow 
		\QQ / \ZZ, 
		\\
		\(\{ \x_p \}, \alpha \) 
		&\mapsto 
		\sum_{p \le \infty} \inv_p \( \ev_{\alpha, p} \( \x_p \)\).
	\end{split}
\end{equation*}
It is known that any element of $X(\QQ)$ is in the left kernel of the above pairing and the image of $\Br \QQ \rightarrow \Br X$ is in the right kernel of that pairing. Since we are interested in the existence of integer points on $\X$ we recall that the set of ad\`{e}les on $\X$ is defined as
\begin{equation*}
	\X(\A) = X(\RR) \times \prod_{p} \X(\Zp)	
\end{equation*}
and one can view $\X(\A)$ as a subset of $X(\A_\QQ)$ by separateness of $\X$. Therefore, the described pairing above induces a pairing
\begin{equation*}
	\X(\A) \times \(\Br X / \Br \QQ\)
	\rightarrow 
	\QQ / \ZZ
\end{equation*}
which vanishes on the image of $\X(\ZZ)$ in $\X(\A)$. As before, we will use $\X(\ZZ)$ also for the image of $\X(\ZZ)$ in $\X(\A)$. We define the Brauer--Manin set $\X(\A)^{\Br X}$ to be the left kernel of the induced pairing. Thus, it follows that we have the inclusions 
\begin{equation*}
	\X(\ZZ)
	\subseteq 
	\X(\A)^{\Br X} 
	\subseteq 
	\X(\A).
\end{equation*}
We say that there is an integral Brauer--Manin obstruction for $\X$ or a Brauer--Manin obstruction to the existence of integer points on $\X$ if the set of  ad\`{e}les $\X(\A)$ is non-empty but the Brauer--Manin set $\X(\A)^{\Br X}$ is empty. To avoid any confusion, we remark that in the present work when we refer to the Brauer--Manin obstruction we will always mean the Brauer--Manin obstruction to the integral Hasse principle.

For any non-zero rational number $n$ and any non-singular quadratic form $f(x, y, z)$ with rational coefficients let $X \subseteq \AA^3$ denote the quadric surface defined over $\QQ$ by 
\begin{equation*}
	X: \quad f(x, y, z) = n.
\end{equation*}
Let $d = - n \disc (f)$. If $d$ is a square in $\QQ$, then $\Br X / \Br \QQ = 0$ \cite[\S 5.6]{CTX09} and therefore there is no integral Brauer--Manin obstruction. Assume now that $d$ is not a square in $\QQ$ and $X (\QQ) \neq \emptyset$. Then $\Br X / \Br \QQ \cong \ZZ / 2\ZZ$ and there exists a way to find an explicit non-trivial generator of this group (cf. \cite[\S 5.8]{CTX09}). We will briefly explain the algorithm giving this generator. Let $Y \subseteq \PP^3$ be the smooth projective quadric given by the homogeneous equation
\begin{equation*}
	f(x, y, z) - n t^2 
	= 
	0.
\end{equation*}
When $X (\QQ)$ is non-empty there is a $\QQ$-rational point $M$ of $Y(\QQ)$ with $t \neq 0$. Let $l_1 (x, y, z, t)$ be a linear form with coefficients in $\QQ$ defining the tangent plane to $Y$ at $M$. There exist linear forms $l_2, l_3, l_4$ in $x, y, z, t$ and a constant $c \in \QQ^{\ast}$ such that
\begin{equation*}
	f(x, y, z) - nt^2 
	= 
	l_1 l_2 + c (l_3^2 - d l_4^2).
\end{equation*}
Take $\rho = l_1(x, y, z, t) / t$ and $\sigma = l_2(x, y, z, t) / t$. Then $\alpha = (\rho, d)$ is a non-trivial generator of $\Br X / \Br \QQ$ and $(\rho, d) = (-c \sigma, d)$ as elements of that group. 

With the above assumptions, when $\Br X / \Br \QQ \cong \ZZ / 2\ZZ$, in order to check if the Brauer--Manin set is empty we will work with the generator $\alpha$ of $\Br X / \Br \QQ$. For simplicity if $\x_p \in \X(\Zp)$ we will use $\alpha\(\x_p\)$ for the image of $\x_p$ under the map $\ev_{\alpha, p}$. In this case, the composition $\inv_p \circ \ev_{\alpha, p}$ is given by 
\begin{equation*}
	\inv_p \alpha\(\x_p\)
	= 
	\begin{cases}
		0 
		&\mbox{ if } \alpha\(\x_p\) \mbox{ is split over } \Qp, 
		\\
		1/2 
		&\mbox{ if } \alpha\(\x_p\) \mbox{ is not split over } \Qp.
	\end{cases}
\end{equation*}	
Therefore, if $a, b, c, n$ are non-zero integers and $\X_{a, b, c}$ is as in \eqref{X} with $\X_{a, b, c}(\A) \neq \emptyset$, then an integral Brauer--Manin obstruction for $\X_{a, b, c}$ exists precisely when 
\begin{equation*}
	\sum_{p \le \infty} \inv_p (\alpha(\x_p)) 
	= 
	\frac{1}{2}
\end{equation*}
for each point $\{\x_p\} \in \X_{a, b, c}(\A)$.

\begin{remark}\label{SurjRemark}
	Assume that the map $\inv_q \circ \ev_{\alpha, q} : \X(\Zp) \rightarrow \{0, 1/2\}$ is surjective at the prime $q$, say. One can take an ad\`{e}lic point $\{\x_p\} \in \X_{a, b, c}(\A)$ for which the above sum is equal to $1/2$ and change $\x_q$ with $\x_q' \in \X_{a, b, c}(\Zq)$ for which $\inv_q \circ \ev_{\alpha, q}$ takes the other value. In this way another ad\`{e}lic point on $\X_{a, b, c}$ is obtained for which the above sum is equal to $0$. Therefore, there is no integral Brauer--Manin obstruction for $\X_{a, b, c}$. We will use this observation in the proof of Proposition~\ref{Proposition}. 
\end{remark}

We end this section by recording a key result needed for the proof of Theorem~\ref{Th1}.

\begin{proposition} \label{Proposition}
	Let $a$, $b$, $c$ and $n$ be non-zero integers and define $\X_{a, b, c}$ as in \eqref{X}. Then there is no Brauer--Manin obstruction to the existence of integer points on $\X_{a, b, c}$ whenever there exists a prime $p \neq 2$ and an odd positive integer $v$ such that any of the following occur:
	\begin{enumerate}
		\item[\emph{(i)}]{ $p^v \dmid a$ and $p \nmid bcn$,
		}
		\item[\emph{(ii)}]{ $p^v \dmid b$ and $p \nmid acn$,
		}
		\item[\emph{(iii)}]{ $p^v \dmid c$ and $p \nmid abn$.
		}
	\end{enumerate}
\end{proposition}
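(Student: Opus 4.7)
My plan is to apply Remark~\ref{SurjRemark} at the given prime $p$: assuming $\X_{a,b,c}(\A) \neq \eset$ (otherwise the claim is vacuous), it will suffice to prove that $\inv_p \circ \ev_{\alpha, p} \colon \X_{a, b, c}(\Zp) \to \{0, \tfrac{1}{2}\}$ is surjective, where $\alpha$ denotes the generator of $\Br X_{a,b,c}/\Br \QQ$ produced by the algorithm of \S\ref{IBM}. Hasse--Minkowski for ternary forms then gives $X_{a,b,c}(\QQ) \neq \eset$, so the algorithm applies; and if $d := -nabc$ is a square then $\Br X_{a,b,c}/\Br \QQ = 0$ and there is nothing to prove, so I assume $d$ is a non-square. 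By symmetry I treat only case (i): $p \neq 2$, $p^v \dmid a$ with $v$ odd, and $p \nmid bcn$; in particular $v_p(d) = v$ is odd.

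I will first verify the local structure. A Jacobian computation using $p \neq 2$ and $p \nmid bcn$ shows that $\X_{a,b,c} \times_{\ZZ} \Zp$ is smooth over $\Zp$; its special fiber is isomorphic to $\AA^1_x \times C$, where $C \colon by^2 + cz^2 = n$ is a smooth geometrically integral affine conic over $\Fp$. Weak approximation on the rational smooth projective quadric $Y$ then allows me to choose a $p$-integral rational point $M = (x_0, y_0, z_0, 1) \in Y(\QQ)$, and the tangent-plane construction yields $\rho = ax_0 x + by_0 y + cz_0 z - n$, whose reduction modulo $p$ is $\bar\rho = by_0 y + cz_0 z - n$ (the $ax_0 x$ term vanishes since $p \mid a$). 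Observe that $(y_0, z_0) \not\equiv (0, 0) \bmod p$: otherwise the equation for $M$ would give $n \equiv a x_0^2 \equiv 0 \bmod p$, contradicting $p \nmid n$. Hence $\bar\rho$ is a non-constant regular function on the special fiber.

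The heart of the argument is to compute the residue of $\alpha = (\rho, d)$ at the generic point of the special fiber and show that it is non-trivial; this is the step I expect to be the main obstacle. Since $v_p(\rho) = 0$ (as $\bar\rho \not\equiv 0$) and $v_p(d) = v$, the tame symbol formula gives $\partial_p \alpha \equiv \bar\rho^v \equiv \bar\rho$ modulo squares in the function field $\Fp(C)(x)$ of the special fiber, the second equivalence using that $v$ is odd. To verify $\bar\rho$ is not a square, I plan to examine its divisor on the smooth projective completion $\bar C \cong \PP^1_{\Fp}$: the zero divisor is a single point with multiplicity two (the discriminant of the intersection $\{\bar\rho = 0\} \cap \bar C$ is divisible by $a$ and so vanishes modulo $p$, forcing tangency), while the polar divisor is supported at the two points of $\bar C$ at infinity, each with multiplicity one. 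Whether these split over $\Fp$ (when $-bc$ is a square mod $p$) or form a single closed point of degree two, the polar multiplicity at some closed point is $1$, hence odd, so $\bar\rho$ is not a square.

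Non-triviality of the residue forces the restriction $\alpha|_{X_{a,b,c} \times_\QQ \Qp}$ to lie outside the image of $\Br \Qp$, since classes in that image are unramified on the smooth model; hence $\alpha$ represents the non-trivial element of $\Br(X_{a,b,c} \times_\QQ \Qp)/\Br \Qp \cong \ZZ/2\ZZ$. A standard fact in integral Brauer--Manin theory (in the spirit of \cite[\S 3]{CTX09}) then upgrades this to surjectivity of $\inv_p \circ \ev_{\alpha, p}$ on $\X_{a,b,c}(\Zp)$: for a point $\bar\x \in (\X_{a,b,c} \times_{\ZZ} \Fp)(\Fp)$ off the vanishing locus of $\bar\rho$, any Hensel lift $\x_p \in \X_{a,b,c}(\Zp)$ of $\bar\x$ satisfies $\inv_p \ev_\alpha(\x_p) = \tfrac{1}{2}(1 - \chi_p(\bar\rho(\bar\x)))$ with $\chi_p$ the Legendre symbol, while a Weil-type character sum bound on the auxiliary double cover $w^2 = \bar\rho$ of $C$ (which is smooth of genus zero by Riemann--Hurwitz, since it is branched at only the two infinite points of $\bar C$) shows that both values of $\chi_p \circ \bar\rho$ are attained on $C(\Fp)$; the vanishing locus of $\bar\rho$ and the finitely many small primes are accommodated by switching to the alternative representative $(-c\sigma, d)$ of $\alpha$. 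Applying Remark~\ref{SurjRemark} then yields the conclusion.
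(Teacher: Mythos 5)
Your proposal follows the same skeleton as the paper's (referee\mbox{-}supplied) proof: reduce by symmetry to case (i), observe that $v_p(-abcn)=v$ is odd so that $d$ is a non-square and $\Br X/\Br\QQ\cong\ZZ/2\ZZ$, and then invoke Remark~\ref{SurjRemark} after showing that $\inv_p\circ\ev_{\alpha,p}$ takes both values on $\X_{a,b,c}(\Zp)$, the key point in both arguments being that the odd valuation of $d$ turns the Hilbert symbol $(\rho,d)_p$ at a unit value of $\rho$ into the Legendre symbol of $\bar\rho$. Where you genuinely differ is in how surjectivity is certified: the paper takes a tangent line to the conic $by^2+cz^2=nt^2$ at a $\Zp$-point and simply asserts that both quadratic-residue classes of $l(y,z)$ occur on $\X(\Zp)$, whereas you compute the residue of $(\rho,d)$ along the special fibre by the tame symbol, check via the divisor $2[\bar P]-[\infty_1]-[\infty_2]$ that $\bar\rho$ is a non-square, and count points on the genus-zero double cover $w^2=\bar\rho$. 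For $p\ge 5$ your count is airtight and in fact firms up the paper's ``clearly'': it yields at least $(p-3)/2\ge 1$ points in each residue class off the zero locus of $\bar\rho$. Two caveats. Minor: your claim that classes in the image of $\Br\Qp$ are unramified on the model is false (the nontrivial class of $\Br\Qp$ has nonzero, though constant, residue along the special fibre); the conclusion survives because your residue $\bar\rho$ is non-constant modulo squares, and in any case this intermediate assertion is not load-bearing. More substantive: at $p=3$ your count permits $N_+=1$, $N_-=0$ (e.g.\ for $3x^2+y^2-z^2=1$ every $\ZZ_3$-point at which $\rho=y-1$ is a unit has $y\equiv -1$, so $\bar\rho\equiv 1$ is always a square), so the step you defer --- switching to the representative $(-c\sigma,d)$ on the residue disc over $\bar P$ --- is not a tidying-up of exceptional points but carries the entire burden of surjectivity when $p=3$; it does work (in the example, $(0,1,0)$ evaluates to $1/2$ via $(y+1,3)$), but it must actually be carried out. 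The paper's own proof glosses over exactly the same point, so this is a shared soft spot rather than a defect peculiar to your argument.
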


The proof of this result will be given in $\S\ref{UB}$.

\section{Proof of Theorem~\ref{Th0}} \label{LD}
	
During this section $n$ will be a fixed non-zero integer and we determine $\X_{a, b ,c, n} \subseteq \AA^3$ by \eqref{X} but we allow the coefficients $a, b, c$ to arbitrary elements of the larger ring $\Zp$. We shall use $X_{a, b, c, n}$ for the surface when $a, b, c$ are real numbers. Note that in \eqref{X} we required that two of $a$, $b$, $c$ to have different signs. Let
\begin{equation*}
	I 
	= 
	\left\{ (a, b, c) \in [-1, 1]^3 \ : \ abc \neq 0 \mbox{ and $a$, $b$, $c$ not all the same sign} \right\}.
\end{equation*}
Then it is clear that for each $(a, b, c) \in I$ we have $\ X_{a, b, c, n}(\RR) \neq \emptyset$. Let
\begin{equation} \label{Omega}
	\begin{split}
		\Omega_p
		&= 
		\left\{ (a, b, c) \in \Zp^3 \ : \ \X_{a, b, c, n}(\Zp) \neq \emptyset \right\}, 
		\\
		\Omega_\infty
		&= 
		\left\{ (a, b, c) \in I \ : \ X_{a, b, c, n}(\RR) \neq \emptyset \right\} = I.
	\end{split}
\end{equation}
For any prime $p$ let $\mu_p$ be the normalised Haar measure on $\Zp^3$ such that $\mu_p\( \Zp^3 \) = 1$ and let $\mu_{\infty}$ be the Lebesgue measure on $\RR^3$. Then we define the local densities as
\begin{equation} \label{localDensDef}
	\sigma_p
	=
	\mu_p\(\Omega_p\), 
	\quad
	\sigma_{\infty}
	= 
	\frac{\mu_{\infty}\(\Omega_\infty\)}{\mu_{\infty}\( I \)}
	= 
	1.
\end{equation} 	

We start by showing that the explicit expressions for the local densities $\sigma_p$ and $\sigma_\infty$ in the second part of Theorem~\ref{Th0} are the correct ones.

\begin{lemma} \label{LocDens}
	We have $\sigma_\infty = 1$ and 
 	\begin{equation*}
		\sigma_p 
		= 
		\begin{cases}
			1 - \frac{3}{2}p^{-2} + \frac{1}{2}p^{-3} 
			&\mbox{if $p \nmid 2n$}, 
			\\
			\frac{357}{512} 
			&\mbox{if $p = 2, 2 \nmid n$}.  
		\end{cases}
	\end{equation*}
\end{lemma}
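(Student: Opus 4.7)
The plan is to stratify $\Zp^3$ by the $p$-adic valuation triple $(v_p(a), v_p(b), v_p(c))$ and determine, in each stratum, whether $\X_{a,b,c,n}(\Zp)$ is non-empty. The statement $\sigma_\infty = 1$ is immediate from the definitions, since \eqref{Omega} already declares $\Omega_\infty = I$, so no work is needed there.

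For odd $p$ with $p \nmid n$, I would appeal to the standard form of Hensel's lemma to reduce the question to solubility modulo $p$. There are four cases. If $p \nmid abc$, the form is a non-degenerate ternary quadratic form over $\Fp$ and hence represents every element of $\Fp^{\ast}$; since $n \not\equiv 0 \pmod p$ the solution is automatically non-singular and lifts. If exactly one coefficient, say $a$, is divisible by $p$, the equation reduces modulo $p$ to the non-degenerate binary form $by^2 + cz^2 \equiv n \pmod p$, which still represents every element of $\Fp^{\ast}$, and Hensel's lemma again applies. If exactly two of $a, b, c$ are divisible by $p$, say $p \mid a,b$, then mod $p$ the equation becomes $cz^2 \equiv n \pmod p$, solvable precisely when $n/c \in (\Fp^{\ast})^2$; since the squares form an index-$2$ subgroup of $\Zp^{\ast}$, this occurs with $\mu_p$-measure $\tfrac{1}{2}$ in $c$. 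Finally, if all three coefficients are divisible by $p$ then the left-hand side is divisible by $p$ while $n$ is not, so no solution exists. Combining these cases with the weights $\mu_p(v_p(\cdot) \ge 1) = 1/p$ and $\mu_p(v_p(\cdot) = 0) = 1 - 1/p$, together with the multinomial factor $3$ for the second and third cases, gives
\[
\sigma_p = (1 - p^{-1})^3 + 3 p^{-1}(1 - p^{-1})^2 + \tfrac{3}{2}\, p^{-2}(1 - p^{-1}),
\]
which simplifies to the claimed $1 - \tfrac{3}{2}p^{-2} + \tfrac{1}{2}p^{-3}$.

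For $p = 2$ with $n$ odd, the computation is more delicate because the naive Hensel lemma fails: the partial derivatives $2ax$, $2by$, $2cz$ all have positive $2$-adic valuation. The approach would be to invoke the strong form of Hensel's lemma, which here asserts that if one exhibits $(x_0, y_0, z_0) \in \ZZ_2^3$ with the lifting coordinate a $2$-adic unit and $ax_0^2 + by_0^2 + cz_0^2 \equiv n$ to a sufficiently high power of $2$ (essentially $\pmod{8}$ in the present situation), then a $\ZZ_2$-solution exists; and conversely, if all such congruences fail at the appropriate level, the equation is insoluble. I would again stratify by $(v_2(a), v_2(b), v_2(c))$, using the standard facts that units in $\ZZ_2$ have squares $\equiv 1 \pmod 8$, elements of valuation exactly $1$ have squares $\equiv 4 \pmod{16}$, and higher-valuation elements contribute $0 \pmod{16}$, to enumerate the residue classes of $(a, b, c) \pmod{8}$ giving solubility in each stratum. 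Since strata with all three valuations large are insoluble (the LHS is even, $n$ is not) and otherwise only finitely many strata of bounded valuation contribute, this reduces $\sigma_2$ to a finite sum of $2$-adic volumes.

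The main obstacle is the systematic bookkeeping at $p = 2$: unlike the odd-prime case, where a single Hensel reduction settles everything, here one must carry out a genuine case analysis on $(v_2(a), v_2(b), v_2(c))$ combined with residues modulo $8$, paying particular attention to the sub-cases where exactly one or two of the coefficients lie in $2\ZZ_2 \setminus 4 \ZZ_2$. Once the tabulation is complete, summing the corresponding $2$-adic volumes should yield the explicit rational $\tfrac{357}{512}$.
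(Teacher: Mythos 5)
Your argument is correct and follows essentially the same route as the paper: for odd $p \nmid n$ the paper likewise reduces to the stratification by which coefficients are units, identifies the only insoluble strata as ``two coefficients divisible by $p$ with the remaining one times $n$ a non-residue'' (measure $\frac{3(p-1)}{2p^3}$) or ``all three divisible'' (measure $p^{-3}$), and subtracts these from $1$, which is just the complementary form of your sum. For $p=2$ the paper, like you, only sketches the mod-$8$ tabulation and states the result $\frac{357}{512}$, so your level of detail there matches the source.
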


\begin{proof}
	We have already seen that $\sigma_\infty = 1$. We will complete the proof of this result by giving an algorithm for computing $\sigma_p$ for any odd prime $p$, and for $p = 2$ when $n$ is odd. Let $p$ be any fixed prime and for each $j$ such that $p^j \mid n$ let $v_j = \valp(n p^{-j})$ and
	\begin{equation*}
		\begin{split}
			\kappa_p(j) 
			&= 
			\mu_p \( \left\{ (a, b, c) \in \Zp^3 \ : \ \X_{a, b, c, n p^{-j}} (\Zp) \neq \emptyset \mbox{ and } \min\{ \valp(a), \valp(b), \valp(c) \} = 0 \right\} \), 
			\\
			\kappa_p^c(j) 
			&= 
			\mu_p \( \left\{ (a, b, c) \in \Zp^3 \ : \ \X_{a, b, c, n p^{-j}} (\Zp) = \emptyset \mbox{ and } \min\{ \valp(a), \valp(b), \valp(c) \} = 0 \right\} \).
		\end{split}
	\end{equation*}
	Then, since $\X_{a, b, c, n} (\Zp) \neq \emptyset$ implies that $\min\{ \valp(a), \valp(b), \valp(c) \} \le \valp(n)$ we can decompose $\sigma_p$ as
	\begin{equation} \label{sigma_p decomp_0}
		\sigma_p 
		=
		\sum_{0 \le j \le \valp(n)} \frac{1}{p^{3j}} \kappa_p(j).
	\end{equation}
	It is clear that for each $j$ we have $\kappa_p(j) = 1 - p^{-3} - \kappa_p^c(j)$. Thus, we get
	\begin{equation} \label{sigma_p decomp}
		\sigma_p
		=
		1 - \frac{1}{p^{3\valp(n) + 3}} - \sum_{0 \le j \le \valp(n)} \frac{1}{p^{3j}} \kappa_p^c(j). 
	\end{equation}
	Consider the case of an odd prime $p$. Firstly, we compute $\kappa_p^c(\valp(n))$. If $\valp(a) = 0$, then $a x^2 + b y^2 + c z^2 = n p^{-\valp(n)}$ is soluble over $\Zp$ except when $\valp(b)$ and $\valp(c)$ are both positive and $an p^{-\valp(n)}$ is not a square mod $p$. Thus, by the symmetry in $a$, $b$, $c$ in the definition of $\kappa_p^c(j)$ we get
	\begin{equation} \label{kappa_p}
		\kappa_p^c(\valp(n))
		= 
		\frac{3(p - 1)}{2p^3}.
	\end{equation}
	If $p \mid n$ we proceed with the remaining terms $\kappa_p^c(j)$ one by one in a similar manner. Again, we can assume that $\valp(a) = 0$. The conditions for solubility of $a x^2 + b y^2 + c z^2 = n p^{-j}$ over $\Zp$ depend on whether each of $v_j$, $\valp(b)$ and $\valp(c)$ is even or odd and on the values of $\valp(b)$ and $\valp(c)$ compared to $v_j$. To compute $\kappa_p^c(j)$ we look separately at each of the cases mentioned above up to a  symmetry. The precise conditions in each of them are:
	\begin{enumerate}
		\item[---]{Suppose that $\valp(b) > v_j$ and $\valp(c) > v_j$.
 		\begin{itemize}
			\item{If $v_j$ is even, then there are no $\Zp$-points on $\X_{a, b, c, n p^{-j}}$ if and only if
  			\begin{equation*}
  				\(\frac{anp^{-\valp(n)}}{p}\)
  				=
  				-1.
  			\end{equation*}
 			}
			\item{If $v_j$ is odd, then there are no $\Zp$-points on $\X_{a, b, c, n p^{-j}}$.
			}
		\end{itemize}
		}	 
		\item[---]{Suppose that $\valp(b) \le v_j$ and $\valp(c) > v_j$.
		\begin{itemize}
			\item{If $v_j$ and $\valp(b)$ are even, then there is always a $\Zp$-point on $\X_{a, b, c, n p^{-j}}$.
			}
			\item{If $v_j$ is even and $\valp(b)$ is odd, then there are no $\Zp$-points if and only if 
  			\begin{equation*}
  				\(\frac{anp^{-\valp(n)}}{p}\)
  				=
  				-1.
			\end{equation*}
			}
			\item{If $v_j$ is odd and $\valp(b)$ is even, then there are no $\Zp$-points if and only if
			\begin{equation*}
				\(\frac{-abp^{-\valp(b)}}{p}\)
				=
				-1.
			\end{equation*}
			}
			\item{If $v_j$ and $\valp(b)$ are both odd, then there are no $\Zp$-points if and only if 
			\begin{equation*}
				\(\frac{bnp^{-\valp(bn)}}{p}\)
				=
				-1.
			\end{equation*}
			}
		\end{itemize}
		}
		\item[---]{Suppose that $\valp(b) \le v_j$ and $\valp(c) \le v_j$.
 		\begin{itemize}
			\item{If $v_j$ is even and $\valp(b)$ or $\valp(c)$ is even, then there is always a $\Zp$-point on $\X_{a, b, c, n p^{-j}}$.
			}
			\item{If $v_j$ is even, $\valp(b)$ and $\valp(c)$ are both odd, then there are no $\Zp$-points on $\X_{a, b, c, n p^{-j}}$ if and only if 
			\begin{equation*}
				\(\frac{anp^{- \valp(n)}}{p}\)
				=
				\(\frac{-bcp^{- \valp(bc)}}{p}\)
				=
				-1.
			\end{equation*}  
			}
			\item{If $v_j$ is odd and $\valp(b)$ and $\valp(c)$ are both even or odd, then there is always a $\Zp$-point on $\X_{a, b, c, n p^{-j}}$.
			}
 			\item{If $v_j$ is odd, $\valp(b)$ is odd and $\valp(c)$ is even, then there are no $\Zp$-points on $\X_{a, b, c, n p^{-j}}$ if and only if 
  			\begin{equation*}
  				\(\frac{bnp^{- \valp(bn)}}{p}\)
  				=
  				\(\frac{-acp^{- \valp(c)}}{p}\)
  				=
  				-1.
  			\end{equation*}  
 			}
 		\end{itemize}
		}
	\end{enumerate}
	In order to illustrate the algorithm we will demonstrate how to compute the contribution in $\kappa_p^c(j)$ coming from the case when $v_j$ is even, $\valp(b)$ and $\valp(c)$ are both odd and bounded above by $v_j$. We need to sum the measures of subsets of $\Zp^3$ consisting of triples $(a, b, c)$ for which $\valp (a) = 0$, $\valp(b)$ and $\valp(c)$ are odd positive integers not exceeding $\valp(n) - 1$, and which satisfy the conditions of the case. This sum is equal to
	\begin{equation*}
		\frac{p - 1}{p} 
		\cdot
		\( \sum_{1 \le l \le v_j/2} \frac{p - 1}{p^{2l}} \) 
		\cdot 
		\( \sum_{1 \le m \le v_j/2} \frac{p - 1}{p^{2m}} \) 
		\cdot
		\frac{1}{4}
		= 
		\frac{( p - 1) (p^{v_j} - 1)^2}{4 (p + 1)^2 p^{2 v_j + 1}}.
	\end{equation*} 
	What is left is to multiply the above by $3$. That is because in the beginning we assumed that $\valp(a) = 0$. In each of the other cases listed above a similar computation is required to find $\kappa_p^c(j)$. Let
	\begin{equation*}
		\begin{split}
 			\alpha_j(p) 
 			&= 
 			p^{2v_j + 3} - p^{2v_j + 2} + 2 p^{v_j + 3} + 2 p^{v_j + 2} - 4 p^{v_j + 1} - p^3 + p^2 + 2 p - 2, 
 			\\
 			\beta_j(p) 
 			&= 
 			p^{2v_j + 4} - p^{2v_j + 3} + p^{2v_j + 2} - p^{2v_j + 1} + 2 p^{v_j + 4} + p^{v_j + 3} + 3 p^{v_j + 2} 
 			\\
 			&- 5 p^{v_j + 1} - p^{v_j} + 2 p^2 - 2 p, 
 			\\
 			\gamma_j(p)
 			&= 
 			p^{2v_j + 5} + 2 p^{2v_j + 4} + p^{2v_j + 3}.
		\end{split}
	\end{equation*}
	Then the prescribed algorithm above yields
	\begin{equation} \label{kappa_p(v_j)}
		\kappa_p^c(j)
		= 
 		\begin{cases}
 			\frac{3}{4}\alpha_j(p) \( \gamma_j(p) \)^{-1}
 			&\mbox{ if } v_j \mbox { is even},
 			\\
 			\frac{3}{4}\beta_j(p) \( \gamma_j(p) \)^{-1} 
 			&\mbox{ if } v_j \mbox { is odd}.
 		\end{cases}
	\end{equation}
	Now, if we let
	\begin{equation*}
 		\Sigma(p, n) 
 		=
 		\frac{3}{4}
 		\sum_{0 \le j < \valp(n)} \frac{1}{p^{3j}}
 		\( \frac{1 + (-1)^{v_j}}{2} \cdot \frac{\alpha_j(p)}{\gamma_j(p)} + \frac{1 + (-1)^{v_j + 1}}{2} \cdot \frac{\beta_j(p)}{\gamma_j(p)} \),
 	\end{equation*}
	then from \eqref{sigma_p decomp}, \eqref{kappa_p} and \eqref{kappa_p(v_j)} it follows that for each odd prime $p$ we have 
	\begin{equation*}
		\sigma_p 
		= 
		1 - \frac{1}{p^{3\valp(n) + 3}} - \frac{3}{2}(p - 1)p^{-3} - \Sigma(p, n).
	\end{equation*}
	When $p \nmid n$ we have $\Sigma(p, n) = 0$ and then as claimed in Lemma~\ref{LocDens}
	\begin{equation*}
		\sigma_p 
		= 
		1 - \frac{3}{2p^{2}} + \frac{1}{2p^3}.
	\end{equation*}	

	We now consider the case $p = 2$, assuming that $2 \nmid n$. Here the strategy is to look at admissible triples mod $8$. We start with the assumption $\val2(a) = 0$ by symmetry and we look at different cases according to whether $\val2(b)$ and $\val2(c)$ are smaller or greater than $3$ analogous to what we did before. Computing the contribution from each case separately yields
	\begin{equation} \label{kappa_2}
		\sigma_2 
		= 
		\frac{357}{512},
	\end{equation}
	as required.
\end{proof}

We have two remarks. The first one is that \eqref{sigma_p decomp_0} together with the the remark after it, and  \eqref{kappa_p}, \eqref{kappa_2} imply that for each prime we have
\begin{equation*}
	\sigma_p \ge \frac{1}{p^{\valp(n)}} \kappa_p(\valp(n)) 
	> 
	0.
\end{equation*} 
The second remark is that the product $\sigma_\infty \prod_p \sigma_p$ is obviously convergent.
\medskip
  
We continue with the first part of Theorem~\ref{Th0}. It is implied by a result from the work of Bright, Browning and Loughran \cite[Lemma~3.1]{BBL16}. We only need to check that the assumptions of Lemma~3.1 from their work hold in our setting with $\Omega_\infty$ and $\Omega_p$ defined as in \eqref{Omega}.

The positiveness of the measures of $\Omega_\infty$ and $\Omega_p$ for all $p$ has already been shown. It is clear that $\Omega_\infty$ is a union of six unit cubes in $\RR^3$ and therefore its boundary must have zero measure. In the proof of Lemma~\ref{LocDens} we have shown that the complement of $\Omega_p$ in $\Zp^3$ is a finite union of clopen sets. Thus, one concludes that $\Omega_p$ is clopen set which implies that it must have zero measure of the boundary.
	
Let	
\begin{equation*}
	R(B)
	= 
	\# \left\{ (a, b, c) \in B \Omega_\infty \cap \ZZ^3 \ : \ \exists \mbox{ a prime } p > M \mbox{ s.t. } (a, b, c) \not\in \Omega_p \right\}.
\end{equation*}
To check the last hypothesis of \cite[Lemma~3.1]{BBL16} we use the simple fact that when computing the limit
\begin{equation} \label{limMB}
	\lim_{M \to \infty} \limsup_{B \to \infty} \frac{R(B)}{B^3}
\end{equation}	
we can assume that $M > n$ and therefore the prime $p$ appearing in the definition of $R(B)$ does not divide $n$. Therefore, a triple $(a, b, c) \not\in \Omega_p$ if either
\begin{enumerate}
	\item{$p$ divides exactly two of $a, b, c$ and the remaining coefficient times $n$ is not a square mod $p$, or
	}
	\item{$p$ divides all of the coefficients $a, b, c$.
	}
\end{enumerate}
Thus we get
\begin{equation*}
	R(B)
	\ll 
	\# \left\{ (a, b, c) \in B \Omega_\infty \cap \ZZ^3 \ : \ \exists p > M \mbox{ s.t. } p \mid (a, b) \right\}.
\end{equation*}
A simple calculation shows that
\begin{equation*}
	\begin{split}
		R(B)
		&\ll 
		\# \left\{ 0 \le a, b, c \le B \ : \ \exists p > M \mbox{ s.t. } p \mid (a, b) \right\} 
		\\
		&\ll 
		B \sum_{p > M} \# \left\{ 0 < a, b \le B \ : \ p \mid (a, b) \right\} + B^2.
	\end{split}
\end{equation*}
Then since the last sum is bounded above by $B^2 M^{-1}$ we get
\begin{equation*}
	R(B)
	\ll 
	\frac{B^3}{M} + B^2.
\end{equation*}
Therefore the limit given in \eqref{limMB} is equal to zero. This verifies that \cite[Lemma~3.1]{BBL16} is applicable in our situation which concludes the proof of Theorem~\ref{Th0}.

\section{Proof of Theorem~\ref{Th1}} \label{UB}

We begin by establishing Proposition~\ref{Proposition}, following a proof suggested to us by the referee. Let $a$, $b$, $c$, and $n$ be fixed non-zero integers which satisfy one of the conditions $(i)$, $(ii)$ or $(iii)$ in the statement of Proposition~\ref{Proposition}. Let $\X = \X_{a, b, c}$ and $X = \X \times_\ZZ \QQ$. Because of the symmetry in $a$, $b$, and $c$ it suffices to consider the case when there exists a prime $p > 2$ and an odd positive integer $v$ such that
\begin{equation} \label{abcn}
	p^v \dmid a 
	\mbox{ and } 
	p \nmid bcn. 
\end{equation}
We fix $p$ to be one such prime and as before we let $d = -abcn$. We also let $\X_p = \X \times_\ZZ \Zp$ and $X_p = X \times_{\QQ} \Qp$. The hypothesis \eqref{abcn} implies that $d$ is not a square in $\QQ$, nor in $\Qp$. Thus by \cite[\S5.6]{CTX09} we have
\begin{equation*}
	\Br X / \Br \QQ 
	\cong 
	\Br X_p / \Br \Qp 
	\cong
	\ZZ / 2\ZZ.
\end{equation*}

We assume that $\X$ has a $\ZZ_p$-point for all primes including $\infty$. We will show that the map in $\inv_q \circ \ev_{\alpha, q} : \X(\Zp) \rightarrow \{0, 1/2\}$ for a generator $\alpha$ of $\Br X / \Br \QQ$ is surjective at the prime $p$ satisfying \eqref{abcn}. By Remark~\ref{SurjRemark} it is enough to conclude that there is no Brauer--Manin obstruction for $\X$. 

It will suffice to work with a generator $\beta$ of $\Br X_p / \Br \Qp$. We know that $\X_p(\Zp) = \X(\Zp)$. In this case, it will be enough to show that the map
\begin{equation} \label{eq:surj beta}
	\X_p(\Zp) 
	\xrightarrow{\ev_{\beta, p}}  
	\Br \Qp 
	\xrightarrow{\inv_p} 
	\QQ / \ZZ
\end{equation}
takes both values $0$ and $1/2$.

Since $p \nmid bcn$ we have a $\Zp$-point with at least one of the coordinates $y$ or $z$ non-zero and a non-zero $t$-coordinate on the curve $\C \subseteq \PP^2$ given by
\begin{equation*}
	\C: \quad by^2 + cz^2 - nt^2 = 0.
\end{equation*}
If $l_1(y, z, t)$ defines the tangent line to $\C$ at this $\Zp$-point, then there exist linear forms $l_2(y, z, t), l_3(y, z, t) \in \Zp[y, z, t]$ and a non-zero constant $e \in \Qp$ such that
\begin{equation*}
	by^2 + cz^2 - nt^2
	=
	l_1(y, z, t) l_2(y, z, t)
	-
	e^2bcn (l_3(y, z, t))^2.
\end{equation*} 
Thus
\begin{equation*}
	ax^2 + by^2 + cz^2 - nt^2
	=
	ax^2 + l_1(y, z, t) l_2(y, z, t) - e^2bcn (l_3(y, z, t))^2. 
\end{equation*}

Passing to coordinates on $X_p$ we let $l(y, z) = l_1(y, z, t)/t$. Thus, by $\S2$, the quaternion algebra $(l(y, z), -abcn)$ is a representative for the generator of $\Br X_p / \Br \Qp$ and it can be evaluated at each $(x, y, z) \in \X_p(\Zp)$ for which $l(y, z) \neq 0$. Since at least one of the coefficients in front of $y$ or $z$ in $l(y, z)$ is non-zero, there are clearly points in $\X_p(\Zp)$ with $l(y, z) \in \Zp^\times$ equal to a square and points for which it is a non-square unit in $\Zp$. We know that $(l(y, z), -abcn)$ is split over $\Qp$ if and only if the associated Hilbert symbol $(l(y, z), -abcn)_p = 1$. Since the $p$-adic valuation of $abcn$ is odd, when $l(y, z)$ is a unit in $\Zp$ we have
\begin{equation*}
	(l(y, z), -abcn)_p
	=
	\(\frac{l(y, z)}{p}\)
	=
	\begin{cases}
		1 
		&\mbox{if $l(y, z)$ is a square unit in } \Zp,
		\\
		-1
		&\mbox{if $l(y, z)$ is a non-square unit in } \Zp.
	\end{cases}
\end{equation*}
Thus the map \eqref{eq:surj beta} is surjective on $\{0, 1/2\}$, which concludes the proof of Proposition~\ref{Proposition}.
\medskip

We proceed with the proof of Theorem~\ref{Th1}. For any given $k, l, m \in \ZZ$ define $\P_k$ to be the set of odd primes which divide $k$ and do not divide $n$ and define the indicator function
\begin{equation*}
	\delta_n(k, l, m) 
	= 
 	\begin{cases}
 		1  
 		&\mbox{if } p \in \P_k \mbox{ implies either } \valp(k) \mbox{ is even or } p \mid lm, 
 		\\
 		0  
 		&\mbox{otherwise}.
 	\end{cases} 
\end{equation*}
Proposition \ref{Proposition} implies that the surfaces counted by $N_{\Br}(B)$ form a subfamily of the family of quadric surfaces $\X$ which satisfy 
\begin{equation} \label{delta}
	\delta_n (a, b, c) 
	= 
	\delta_n(b, a, c) 
	= 
	\delta_n(c, a, b) 
	= 
	1.
\end{equation}
Bounding from above the number of surfaces in $\mathcal{F}_n$ of height not exceeding $B$ which satisfy \eqref{delta} will allow us to obtain an upper bound for the quantity $N_{\Br}(B)$. This number is equal to
\begin{equation*}
	S(B)
	= 
	\sum_{\substack{a, b, c \in \ZZ \cap [-B, B] \setminus \{ 0 \} \\ a, b, c \text{ not all the same sign}}} \delta_n(a, b, c) \delta_n(b, a, c) \delta_n(c, a, b).
\end{equation*}
Since the signs of $a$, $b$ and $c$ are immaterial, we have
\begin{equation*}
	S(B)
	= 
	6 \sum_{0 < a, b, c \le B} \delta_n(a, b, c) \delta_n(b, a, c) \delta_n(c, a, b).
\end{equation*} 
Any $(a, b, c) \in \NN^3$ for which \eqref{delta} holds admits a factorisation
\begin{equation*}
	\begin{split}
		a 
		&= 
		v_1 u_{12} u_{13} a_1^2, 
		\\
		b 
		&= 
		v_2 u_{12} u_{23} b_1^2, \\
		c
		&= 
		v_3 u_{13} u_{23} c_1^2.
	\end{split}
\end{equation*}
The factors $v_i$, $i = 1, 2, 3$ consist of powers of the primes in $\Q = \{2\} \cup \{p \mid n\}$. Let 
\begin{equation*}
	\ve \( v \) 
	=
 	\begin{cases} 
 		1 
 		&\mbox{if } p \mid v \Rightarrow p \in \Q, 
 		\\
 		0 
 		&\mbox{otherwise}.
 	\end{cases}
\end{equation*}
Since there are $O \( B^{1/2} \)$ squares not exceeding $B$, if we sum over $a_1$, $b_1$ and $c_1$ first we get
\begin{equation*}
	S(B) 
	\ll 
	B^{\frac{3}{2}} \sum_{i = 1}^3 
	\sum_{v_i \le B} \frac{\ve(v_i)}{\sqrt{v_i}} 
	\sum_{u_{12}, u_{13}, u_{23} \le B} \frac{1}{u_{12} u_{13} u_{23}}.
\end{equation*}
Next summing over $u_{12}$, $u_{13}$ and $u_{23}$ gives 
\begin{equation*}
	S(B) 
	\ll 
	B^{\frac{3}{2}} \( \log B \)^3
	\sum_{i = 1}^3 
	\sum_{v_i \le B} \frac{\ve(v_i)}{\sqrt{v_i}}.
\end{equation*}
Over $v_i$ we are summing non-negative integers thus if we complete those sums we get something even bigger. It is clear that $\ve \( \cdot \)$ is completely multiplicative function which allows us to write the completed sums as Euler products and therefore we get
\begin{equation*}
	S(B) 
	\ll  
	B^{\frac{3}{2}}  \( \log B \)^3
	\prod_{p \mid 2n} \( 1 - \frac{1}{p^{\frac{1}{2}}} \)^{-3} 
	\ll_n  
	B^{\frac{3}{2}} \( \log B \)^3.
\end{equation*}
This completes the proof of Theorem~\ref{Th1}.

\section{Proof of Theorem~\ref{Th2}} \label{AF}
	
We begin the proof of Theorem~\ref{Th2} by studying a more general family than $\mathcal{F}_1'$, and in particular the conditions under which each surface in this family has points everywhere locally but has no points globally. For any non-zero $a$, $b$, $c \in \ZZ$ let $\X_{a, b, c}^{\ast} \subseteq \AA^3$ be the variety over $\ZZ$ defined by
\begin{equation} \label{X ast}
	\X_{a, b, c}^\ast : \quad a x^2 + b y^2 + c^2 z^2 = 1
\end{equation}	
with $a$, $b$ not both positive and let $X_{a, b, c}^\ast = \X_{a, b, c}^\ast \times_\ZZ \QQ$. Clearly, $X_{a, b, c}^\ast(\RR) \neq \emptyset$.  We start with a simple lemma concerning the local solubility of $\X_{a, b, c}^\ast$.

\begin{lemma} \label{local X'}
	Let $p$ be an odd prime. Then $\X^\ast_{a, b, c}(\Zp) \neq \emptyset$ precisely when $p \nmid (a, b, c)$ and the following two conditions hold:
 	\begin{enumerate}
 		\item[\emph{(i)}]{if $p \mid (a, c)$, then $b$ is a quadratic residue mod $p$,
 		}
 		\item[\emph{(ii)}]{if $p \mid (b, c)$, then $a$ is a quadratic residue mod $p$.
 		}	
 	\end{enumerate}
\end{lemma}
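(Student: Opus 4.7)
The plan is to reduce the defining equation of $\X^\ast_{a,b,c}$ modulo $p$, exhibit a smooth $\Fp$-point in each admissible case, and lift it to $\Zp$ by Hensel's lemma, which applies throughout since $p$ is odd.

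For necessity, suppose $\X^\ast_{a,b,c}(\Zp) \neq \emptyset$. Since $p \mid c$ is equivalent to $p \mid c^2$, the condition $p \mid (a, b, c)$ would make the left hand side of \eqref{X ast} divisible by $p$ for every $(x, y, z) \in \Zp^3$, contradicting the right hand side $1$; hence $p \nmid (a, b, c)$. If additionally $p \mid (a, c)$, then $p \nmid b$ by the previous step, and reducing modulo $p$ yields $by^2 \equiv 1 \pmod p$, forcing $b$ to be a quadratic residue modulo $p$; condition (ii) follows by the symmetry between $a$ and $b$.

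For sufficiency I perform a case analysis on the divisibility of $a$, $b$, $c$ by $p$. When $p \nmid ab$, setting $z = 0$ reduces the problem to solving $ax^2 + by^2 \equiv 1 \pmod p$; the sets $\{ax^2 : x \in \Fp\}$ and $\{1 - by^2 : y \in \Fp\}$ each have cardinality $(p+1)/2$, so they intersect and a solution exists. The cases $p \mid a$, $p \nmid bc$ and $p \mid b$, $p \nmid ac$ are treated identically using the other two pairs of variables, noting that $c^2$ is then a unit modulo $p$. If $p \mid (a, b)$ and $p \nmid c$, the point $(0, 0, c^{-1})$ solves the reduction; if $p \mid (a, c)$ and $p \nmid b$, the hypothesis that $b$ is a quadratic residue modulo $p$ provides $y_0 \in \Fp^\times$ with $by_0^2 \equiv 1 \pmod p$, so $(0, y_0, 0)$ works, and the case $p \mid (b, c)$ is symmetric. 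In every case the constructed $\Fp$-point has a non-zero coordinate in a variable whose coefficient is a $p$-adic unit, so one of the partial derivatives $2ax$, $2by$, $2c^2 z$ is a $p$-adic unit at that point and Hensel's lemma delivers the required lift to $\Zp$. There is no substantive conceptual obstacle; the only mild subtlety is keeping track of the equivalence $p \mid (a, b, c^2) \iff p \mid (a, b, c)$ so that the natural conditions arising from the mod $p$ reduction match the statement.
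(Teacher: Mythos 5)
Your proof is correct and follows essentially the same route as the paper's: reduce modulo $p$, exhibit (or count) a solution of the reduced equation in each divisibility case, and lift via Hensel's lemma using that $p$ is odd, with the quadratic-residue conditions emerging from the reduction exactly as in conditions (i) and (ii). You are somewhat more explicit than the paper about the counting argument for $ax^2+by^2\equiv 1 \pmod p$ and about verifying that a partial derivative is a unit at the chosen point, but the underlying argument is the same.
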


\begin{proof}
	Suppose $p \mid (a, b, c)$, then obviously $\X^\ast_{a, b, c}(\Zp) = \emptyset$. On the contrary, if $p \nmid abc$ or $p$ divides exactly one of the coefficients $a$, $b$, $c^2$, then counting the possible values of $x$, $y$, $z$ mod $p$ shows that the reduced equation defining $\X^\ast_{a, b, c}(\Zp)$ mod $p$ is soluble. Since $p$ is an odd prime, then Hensel's lemma is applicable and therefore $\X^\ast_{a, b, c}(\Zp) \neq \emptyset$. If $p \mid (a, b)$ then again reduction mod $p$ and Hensel's lemma prove the existence of a point in $\X^\ast_{a, b, c}(\Zp)$.

	Because of the symmetry in $a$ and $b$ in $\X^\ast_{a, b, c}$, in the remaining case it suffices to consider that $p \mid (a, c)$ and $p \nmid b$. Assume that $b$ is square mod $p$. Then, as above, we can use Hensel's lemma to show that $\X^\ast_{a, b, c}(\Zp) \neq \emptyset$. On the other hand, the existence of a point in $\X^\ast_{a, b, c}(\Zp)$ and the reduction of $a x^2 + b y^2 + c^2 z^2 = 1$ mod $p$ shows that $b$ is a quadratic residue mod $p$.
\end{proof}

We proceed with the explicit conditions under which there is an integral Brauer--Manin obstruction for $\X^\ast_{a, b, c}$. We have the rational point $(0, 0 , -1/c) \in X^\ast_{a, b, c} \( \QQ \)$ and the algorithm explained in $\S\ref{IBM}$ gives a non-trivial generator $(1 + cz, -abc^2) = (1 + cz, -ab)$ of $\Br X^\ast_{a, b, c} / \Br \QQ$ when $-ab$ is not a square in $\QQ$. Define $U$ by 
\begin{equation} \label{defU}
	U: \quad ax^2 + by^2 = (1 - cz)(1 + cz) \neq 0. 
\end{equation}
Following \cite[\S 8.2]{CTX09}, we study the behaviour of the generator $(1 + cz, -ab)$ when $(x, y ,z)$ is a point in the set $\X^\ast_{a, b, c}(\Zp) \cap U(\Qp)$. This is sufficient since $\X^\ast_{a, b, c}(\Zp) \cap U(\Qp)$ is a dense subset of $\X^\ast_{a, b, c}(\Zp)$. Before we proceed we would like to make two important remarks. The first one is that for any point $(x, y ,z) \in \X^\ast_{a, b, c}(\Zp) \cap U(\Qp)$ \eqref{defU} yields
\begin{equation} \label{qalgebra swap}
	(1 + cz, -ab) 
	= 
	(a(1 - cz), -ab)
\end{equation}
as quaternion algebras. The second one is that since
\begin{equation} \label{zero valuation}
	(1 + cz) + (1 - cz) 
	= 
	2,
\end{equation}
then for each odd prime $p$ the $p$-adic valuation of at least one of $(1 + cz)$ or $(1 - cz)$ is zero. 

We look at the case when $p$ is an odd prime first. We separate the study into different cases obtained under divisibility conditions. 
\begin{enumerate}
	\item[---]{If $p \nmid ab$, then
	\begin{equation*}
		\begin{split}
			(1 + cz, -ab)_p 
			&= 
			\( \frac{-ab}{p} \)^{\valp(1 + cz)}, 
			\\
			(a(1 - cz), -ab)_p 
			&= 
			\( \frac{-ab}{p} \)^{\valp(a) + \valp(1 - cz)}.
		\end{split}
	\end{equation*}
	Since $\valp(a) = 0$, \eqref{qalgebra swap} and \eqref{zero valuation} imply that 
	\begin{equation*}
		(1 + cz, -ab)_p 
		= 
		(a(1 - cz), -ab)_p 
		= 
		1.
	\end{equation*}
	}	
	\item[---]{If $p \mid c$ then 
	\begin{equation*}
		(1 + cz, -ab)_p 
		= 
		(1, -ab)_p 
		= 
		1.
	\end{equation*}
	}
	\item[---]{If $p \nmid c$ and $\valp(ab) \equiv 0$ mod $2$ then 
	\begin{equation*}
		(1 + cz, -ab)_p 
		= 
		(1 + cz, -abp^{-\valp(ab)})_p.
	\end{equation*}
	Similar argument as in the case when $p \nmid ab$ implies that if $-abp^{-\valp(ab)}$ is a square mod $p$ or $\valp(a)$ is even, then 
	\begin{equation*}
		(1 + cz, -abp^{-\valp(ab)})_p 
		= 
		1.
	\end{equation*}
	If $-abp^{-\valp(ab)}$ is not a square mod $p$ and $\valp(a)$ is odd, then
	\begin{equation*}
		(1 + cz, -ab)_p
		= 
		(-1)^{\valp(1 + cz)}
		= 
		(-1)^{1 + \valp(1 - cz)}.
	\end{equation*}
	For all $(x, y, z) \in \X^\ast_{a, b, c}(\Zp) \cap U(\Qp)$ we have either $z \equiv 1/c$ or $z \equiv -1/c$ mod $p$. Thus, by Remark~\ref{SurjRemark} there is no integral Brauer--Manin obstruction in this case.  
	}
	\item[---]{If $p \nmid c$, $p \mid (a, b)$ with $\valp(ab) \equiv 1$ mod $2$, then the reduction of $\X^\ast_{a, b, c}$ mod $p$ forces all $(x, y, z)$ to satisfy $z \equiv \pm 1 / c$ mod $p$. Assume that $\valp(a) \equiv 0$ mod $2$. If $z \equiv 1 / c$ mod $p$, then we have 
	\begin{equation} \label{2p}
		(1 + cz, - ab)_p 
		= 
		\( \frac{2}{p} \).
	\end{equation}
	On the other hand, when $z \equiv -1 / c$ mod $p$ then 
	\begin{equation*}
	(a(1 - cz), - ab)_p 
	= 
	\( \frac{2ap^{-\valp(a)}}{p} \).
	\end{equation*}
	Thus, by \eqref{qalgebra swap} we conclude that \eqref{2p} holds to all $(x, y, z) \in \X^\ast_{a, b, c}(\Zp) \cap U(\Qp)$ unless $ap^{-\valp(a)}$ is not a square mod $p$. In the latter case, the map of \eqref{eq:ev composed with inv} which sends $(x, y, z)$ to $\inv_p(1 + cz, - ab)$ takes both values $0$ and $1/2$. Therefore, Remark~\ref{SurjRemark} implies that there is no integral Brauer--Manin obstruction. 	
	
	When $\valp(b)$ is even, then
	\begin{equation*}
		(a(1 - cz), - ab)_p 
		= 
		\( \frac{2bp^{-\valp(b)}}{p} \),
	\end{equation*}
	and the same observation can be made about the residue class mod $p$ of $bp^{-\valp(b)}$.
	}	
	\item[---]{If $p \nmid bc$ and $\valp (a) \equiv 1$ mod $2$, then Proposition~\ref{Proposition} guarantees that there is no integral Brauer--Manin obstruction provided $p \neq 2$.
 	}	
\end{enumerate}

When $p = 2$ one can study which classes of triples $(a, b, c)$ mod 8 are admissible. It is easy to see that if $4 \mid c$ and $2 \nmid ab$ then 
\begin{equation*}
	(1 + cz, - ab)_2 
	= 
	1.
\end{equation*}
	
Finally, when $a$ and $b$ have different signs, then $-ab > 0$ and $(1 + cz, - ab)$ is split over $X^\ast_{a, b, c}(\RR)$. On the contrary, if both $a$ and $b$ are negative, then the existence of points $(x, y, z) \in X^\ast_{a, b, c}(\RR)$ for which $1 + cz$ takes both positive and negative values and Remark~\ref{SurjRemark} imply that there is no integral Brauer--Manin obstruction in this case.

Now one can understand why we restrict our attention to the surfaces $\X_{a, b, c}'$ given in \eqref{X'}. Lemma~\ref{X ast} and the condition $a \equiv 1$ mod 8 justify that the space of ad\`{e}les of $\X'_{a, b, c}$ is non-empty. Recall that in the definition of $\X'_{a, b, c}$ we require $a$, $b$, $c$ to be positive integers not divisible by 3 with $a$, $b$ squarefree, $a \equiv 1$ mod 8 and $(ab, 2c) = 1$. The discussion above together with Proposition~\ref{Proposition} leads to the conclusion that there is an integral Brauer--Manin obstruction for $\X'_{a, b, c}$ precisely when $a = b \equiv 1$ mod $24$ and if a prime $p$ divides $a$, then $3$ is a square mod $p$. Rewriting $a = u$ and $c = v$, we therefore find that
\begin{equation*}
	N'_{\Br}(B)
	=
	\sum_{\substack{u \le B/9 \\ u \equiv 1 \text{ mod } 24}} \mu^2(u)
	\prod_{p \mid u}\frac{1}{2} \( 1 + \( \frac{3}{p} \)\)
	\sum_{\substack{v \le \sqrt{B}/4 \\ (v, 3u) = 1}} 1.
\end{equation*}
We begin with the sum over $v$. Using a well-known identity we replace the condition $(v, 3u) = 1$ by a sum of the M\"{o}bius function $\mu(d)$ over the divisors $d$ of $(v, 3u)$. Then swapping the order of summation gives 
\begin{equation*}
	\sum_{\substack{v \le \sqrt{B}/4 \\ (v, 3u) = 1}} 1
	=
	\frac{B^{\frac{1}{2}}}{6}
	\sum_{d \mid u} \frac{\mu(d)}{d} 
	+ 
	O(\tau(u)).
\end{equation*}
We apply the above asymptotic formula for the sum over $v$ in $N'_{\Br}(B)$ to get
\begin{equation*}
	N'_{\Br}(B)
	=
	\frac{B^{\frac{1}{2}}}{6}
	\sum_{\substack{u \le B/9 \\ u \equiv 1 \text{ mod } 24}} \frac{\mu^2(u)}{\tau(u)}
	\prod_{p \mid u} \( 1 + \( \frac{3}{p} \)\)
	\sum_{d \mid u} \frac{\mu(d)}{d}
	+ 
	O(B \log B).
\end{equation*}
We encode the condition $u \equiv 1$ mod $24$ in a standard way by a sum over the multiplicative characters mod $24$. Then we change the order of summation over $u$ and $d$ and we write the sum over $u$ as a double sum. We let $a_{\chi}(d) = \mu\(d\) \chi\(d\) \prod_{p \mid d}\( 1 + \( \frac{3}{p} \)\)$. Then
\begin{equation*}
	N'_{\Br}(B)
	=
	\frac{B^{\frac{1}{2}}}{48}
	\sum_{\chi \text{ mod } 24 \text{ }}
	\sum_{d \le B/9} \frac{a_{\chi}(d)}{\tau\(d\) d}
	\sum_{\substack{u'u'' \le B/9d \\ (u'u'', d) = 1}} \frac{\mu^2(u'u'')}{\tau(u'u'')}\chi\(u'u''\)\(\frac{3}{u''}\) 
	+ 
	O(B \log B).
\end{equation*}

Let $V_{\chi}(B)$ denote the inside sum over $d$, $u'$ and $u''$, and let $\chi_1\(\ast\)$ denote the multiplicative character $\(\frac{3}{\ast}\) \chi\(\ast\)$ mod $24$, so that
\begin{equation} \label{N_br^5 split}
	N'_{\Br}(B)
	=
	\frac{B^{\frac{1}{2}}}{48}
	\sum_{\chi \text{ mod } 24} V_{\chi}(B)
	+
	O(B \log B).
\end{equation}
We split $V_{\chi}(B)$ into two sums according to the range of $d$. In the first one $d \le B^\frac{1}{2}$ and the second one $B^\frac{1}{2} < d \le B/9$. Using $a_{\chi}(d) \ll \tau(d)$, we bound trivially the sum with $d$ big by $B^\frac{1}{2} \log B$. Then we further split the sum with $d$ small into three sums with respect to the values of $u'$ and $u''$ compared to each other. Thus we get 
\begin{equation} \label{V_chi}
	V_{\chi}(B)
	=
	\sum_{d \le \sqrt{B}} \frac{a_{\chi}(d)}{\tau\(d\) d} \sum_{i = 1}^{3} W_{\chi}^i (B, d)
	+
	O\( B^\frac{1}{2} \log B \),
\end{equation}
where
\begin{equation*}
	W_{\chi}^i (B, d)
	=
	\sum_{\substack{u' \le B/9d \\ (u', d) = 1}} \frac{\mu^2\(u'\)}{\tau\(u'\) } \chi\(u'\)
	\sum_{u'' \in E_i} \frac{\mu^2\(u''\)}{\tau\(u''\)} \chi_1\(u''\)
\end{equation*}
and for $i = 1, 2, 3$ the sets $E_i$ are defined as follows:
\begin{equation*}
	\begin{split}
		E_1
		&=
		\{ u'' \in \NN \ : \ (u'', u'd) = 1, u'' > u' \text{ and } u'' \le B/9u'd\},
		\\
		E_2
		&=
		\{ u'' \in \NN \ : \ (u'', u'd) = 1, u'' = u' \text{ and }  u'' \le B/9u'd \}, 
		\\
		E_3
		&=
		\{ u'' \in \NN \ : \ (u'', u'd) = 1, u'' < u' \text{ and }  u'' \le B/9u'd\}.
	\end{split}
\end{equation*}
	
For simplicity when $i = 1, 2, 3$ let
\begin{equation} \label{V_chi^i}
	V_{\chi}^i (B)
	=
	\sum_{d \le \sqrt{B}} \frac{a_{\chi}(d)}{\tau\(d\) d} W_{\chi}^i (B, d).
\end{equation}
Clearly, $(u', u'') = 1$ and $u' = u''$ are simultaneously satisfied only when $u' = u'' = 1$, thus
\begin{equation} \label{V_chi^2}
	V_{\chi}^2(B)
	\ll 
	\log B.
\end{equation}
	
By symmetry, we can treat only one of $W_{\chi}^1 (B, d)$ and $W_{\chi}^3 (B, d)$. For this reason, from now on we will be concerned with $W_{\chi}^1 (B, d)$ only. From the condition $u' < u'' \le B/9du'$ we observe that the summation in $W_{\chi}^1 (B, d)$ over $u'$ must be in the range $1 \le u' < \sqrt{B/9d}$. We write $W_{\chi}^1 (B, d)$ as the difference of two sums in the following way:
\begin{equation*}
	\begin{split}
		W_{\chi}^1 (B, d)
		&=
		\sum_{\substack{u' < \sqrt{B/9d} \\ (u', d) = 1}}\frac{\mu^2(u')}{\tau(u') } \chi\(u'\)
		\sum_{\substack{u'' \le B/9u'd \\ (u'', u'd) = 1}}\frac{\mu^2(u'')}{\tau(u'')} \chi_1\(u''\) 
		\\
		&-
		\sum_{\substack{u' < \sqrt{B/9d} \\ (u', d) = 1}}\frac{\mu^2(u')}{\tau(u') } \chi\(u'\)
		\sum_{\substack{u'' \le u' \\ (u'', u'd) = 1}}\frac{\mu^2(u'')}{\tau(u'')} \chi_1\(u''\). 
	\end{split}
\end{equation*}
We use the well-known upper bound $\sum_{n \le X} (\tau(n))^{-1} \ll X (\log X)^{-\frac{1}{2}}$ for the latter sum to obtain
\begin{equation} \label{W1 error}
	W_{\chi}^1 (B, d)
	=
	\sum_{\substack{u' < \sqrt{B/9d} \\ (u', d) = 1}}\frac{\mu^2(u')}{\tau(u') } \chi\(u'\)
	\sum_{\substack{u'' \le B/9u'd \\ (u'', u'd) = 1}}\frac{\mu^2(u'')}{\tau(u'')} \chi_1\(u''\)
	+
	O\(\frac{1}{d} \cdot \frac{B}{\log B}\).
\end{equation}
If we let
\begin{equation*}
	S_{\chi}^1(B)
	=
	\sum_{d \le \sqrt{B}}\frac{a_{\chi}(d)}{\tau\(d\) d} 
	\sum_{\substack{u' < \sqrt{B/9d} \\ (u', d) = 1}} \frac{\mu^2\(u'\)}{\tau\(u'\) } \chi\(u'\)
	\sum_{\substack{u'' \le B/9u'd \\ (u'', u'd) = 1}} \frac{\mu^2\(u''\)}{\tau\(u''\)} \chi_1\(u''\),
\end{equation*}
then \eqref{V_chi^i} and \eqref{W1 error} imply 
\begin{equation} \label{V_chi S_chi}
	V_{\chi}^1 (B)
	=
	S_{\chi}^1(B) + O\( \frac{B}{\log B} \).
\end{equation}
	
We are now in position to apply \cite[Lemma~1]{FI10} to the sum over $u''$ in $S_{\chi}^1(B)$. We choose $C = 2$. In the case when $\chi_1$ is the principal character mod $24$ and therefore $\chi$ is the extension of $\(\frac{3}{\ast}\)$ mod $24$ we get
\begin{equation} \label{S_chi^1}
	S_{\chi}^1(B)
	=
	\frac{8B}{105 \pi^\frac{1}{2}}
	\prod_{p}\( 1 + \frac{1}{2p} \)\( 1 - \frac{1}{p} \)^{\frac{1}{2}}
 	T_1(B) + O\( \frac{B \( \log\log B \)^{\frac{3}{2}}}{\log B} \),
\end{equation}	 
where
\begin{equation} \label{T_1}
	\begin{split}
		T_1(B)
		&=
		\sum_{\substack{d \le \sqrt{B} \\ (d, 2) = 1}}\frac{\mu\(d\)}{\tau\(d\) d^2}\(\frac{3}{d}\)
		\prod_{p \mid d}\( 1 + \( \frac{3}{p} \)\) \( 1 + \frac{1}{2p} \)^{-1} 
		\\
		&\times
		\sum_{\substack{u' < \sqrt{B/9d} \\ (u', 2d) = 1}} \frac{\mu^2(u')}{\tau(u') u'} \(\frac{3}{u'}\)
		\prod_{p \mid u'} \( 1 + \frac{1}{2p} \)^{-1} \(\log\(\frac{B}{9u'd}\)\)^{-1/2}.
	\end{split}
\end{equation}
On the other hand, when $\chi_1$ is not the principal character mod $24$, then
\begin{equation} \label{S_chi error}
	S_{\chi}^1(B)
	\ll
	\frac{B}{\log B}.
\end{equation}
It remains to find the asymptotic behaviour of $T_1(B)$. To do so, we begin by studying
\begin{equation}\label{U, U'}
	U_1(B, d)
	=
	\sum_{\substack{u' \le \sqrt{B/9d} \\ (u', 2d) = 1}} \frac{\mu^2(u')}{\tau(u') u'} \( \frac{3}{u'} \)
	\prod_{p \mid u'} \( 1 + \frac{1}{2p} \)^{-1}.
\end{equation}

\begin{lemma} \label{U'_1 af}
	Let $\chi$ be the extension of the quadratic character $\( \frac{3}{\ast} \)$ mod 24. Then the following asymptotic formula holds
	\begin{equation*}
		U_1(B, d)
		=
		\prod_{p} \(1 + \frac{\chi(p)}{2p + 1} \)
		\prod_{p \mid d} \(1 + \frac{\chi(p)}{2p + 1} \)^{-1}
		+
		O \( \tau(d)\frac{\( \log B \)^\frac{1}{2}}{B^{\frac{1}{36}}} \).
	\end{equation*}
\end{lemma}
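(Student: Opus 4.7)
Set $Y := \sqrt{B/9d}$ and let $g$ denote the multiplicative function supported on squarefree integers with $g(p) = \chi(p)/(2p+1)$. A direct calculation on squarefree $u' = p_1 \cdots p_k$, using $\tau(u') = 2^k$, shows that the summand appearing in $U_1(B, d)$ collapses to
\[
\frac{\mu^2(u')}{\tau(u') u'} \chi(u') \prod_{p \mid u'} \frac{2p}{2p+1} \;=\; \frac{\chi(u')}{\prod_i (2p_i+1)} \;=\; g(u'),
\]
so that $U_1(B,d) = \sum_{n \le Y,\; (n, 2d) = 1} g(n)$. The claimed main term coincides with the value of the conditionally convergent infinite series
\[
F(0; d) \;:=\; \sum_{(n,2d) = 1} g(n) \;=\; \prod_{p \nmid 2d}\left(1 + \frac{\chi(p)}{2p+1}\right),
\]
which in turn equals $\prod_p\left(1 + \chi(p)/(2p+1)\right)\prod_{p \mid d}\left(1 + \chi(p)/(2p+1)\right)^{-1}$ since $\chi(2) = 0$. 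The task is therefore to estimate $F(0; d) - U_1(B, d)$ with a power saving in $Y \asymp B^{1/2}$.

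The strategy is to analyse the Dirichlet series
\[
F(s;d) \;=\; \sum_{(n, 2d) = 1} \frac{g(n)}{n^s} \;=\; \prod_{p \nmid 2d}\left(1 + \frac{\chi(p)}{(2p+1)\, p^s}\right)
\]
through the factorisation $F(s;d) = L(s+1, \chi)^{1/2} H(s;d)$, obtained Euler factor by Euler factor from the identity
\[
\left(1 + \frac{\chi(p)}{(2p+1) p^s}\right)\left(1 - \frac{\chi(p)}{p^{s+1}}\right)^{1/2} \;=\; 1 + O\bigl(p^{-s-2}\bigr).
\]
The product $H(s;d)$ thus converges absolutely and uniformly for $\re(s) > -1/2 + \varepsilon$, with dependence on $d$ absorbed into a factor $\ll \tau(d)^{O(1)}$ coming from the missing Euler factors at $p \mid 2d$. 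Since $\chi$ is a non-principal character mod $24$, $L(s,\chi)$ is entire with $L(1,\chi) \ne 0$ and admits the standard classical zero-free region, on a simply connected subset of which $L(s+1, \chi)^{1/2}$ is a well-defined analytic function.

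I would then apply a smoothed Perron--Mellin formula and shift the contour from $\re(s) = \sigma > 0$ across the simple pole of $1/s$ at $s = 0$ down to $\re(s) = -\delta$ inside the zero-free region. The residue at $s = 0$ yields the desired main term $F(0; d) = L(1, \chi)^{1/2} H(0; d)$. The shifted contour integral is bounded using a convexity-type estimate for $|L(1+s, \chi)|^{1/2}$ on vertical strips together with the uniform bound on $H(s;d)$; this is balanced against the Perron truncation error at height $T$, in which one controls $\sum_{n \le Y} |g(n)|/n^\sigma \ll (\log Y)^{1/2}$ as $\sigma \to 0^+$ via a Mertens-type estimate $\prod_{p \le Y}(1 + 1/(2p)) \sim c(\log Y)^{1/2}$. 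An appropriate choice of $\delta$ and $T$ delivers the claimed error term $O(\tau(d)(\log B)^{1/2}/B^{1/36})$: the $(\log B)^{1/2}$ factor is inherited from the Mertens asymptotic and the exponent $1/36$ arises from the optimal balance between contour and truncation contributions once $Y \asymp B^{1/2}$ is fed in.

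The principal obstacle is the presence of the square-root factor $L(s+1, \chi)^{1/2}$, which forces one to work inside a simply connected zero-free region for $L(s, \chi)$ and to secure subconvexity-type bounds on the shifted vertical line. A secondary subtlety is uniformity in $d$: one must keep track of the Euler factors removed at $p \mid 2d$ carefully enough that only a harmless divisor-type loss $\tau(d)$ appears in the final error, rather than a coarser $d^{\varepsilon}$ bound that would spoil subsequent summation over $d$.
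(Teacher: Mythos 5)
Your route is the paper's route in every structural respect: the same collapse of the summand to the multiplicative function $\prod_{p \mid u'} \chi(p)(2p+1)^{-1}$, the same Dirichlet series with the factorisation $F(s) = L(s+1,\chi)^{1/2} H(s;d)$, where $H$ converges absolutely for $\re(s) > -1/2$ and carries a $\tau(d)$ from the Euler factors at $p \mid d$, then Perron's formula, a contour shift past the simple pole of $1/s$ at $s = 0$ whose residue $F(0)$ is exactly the stated main term, and $L$-function growth estimates on the shifted contour. The identification of the main term and the bookkeeping in $d$ are correct, and the use of a smoothed rather than truncated Perron formula is immaterial.

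The one point where you genuinely diverge is also the point where your argument, as described, cannot deliver the stated error term. You propose to shift the contour to a \emph{fixed} abscissa $\re(s) = -\delta$ while remaining \emph{inside the classical zero-free region} of $L(w,\chi)$. These two demands are incompatible: the classical region has width only $c/\log(|t|+2)$, so for any fixed $\delta > 0$ the line $\re(w) = 1 - \delta$ exits it at bounded height, and a contour confined to the classical region yields an error of size $\exp(-c\sqrt{\log B})$, not a power saving. (Convexity or subconvexity bounds for $L$ on the line $\re(w) = 1-\delta$ do not help if $L(s+1,\chi)^{1/2}$ is not even single-valued in the region swept by the deformation.) The paper instead shifts to the fixed line $\re(s) = -\eta$ with $\eta = 1/3$, taking the analytic continuation of $L(s+1,\chi)^{1/2}$ there for granted, and invokes Kolesnik's bound $|L(\sigma + it, \chi)| \ll_\ve (|t|+2)^{(1-\sigma)/6 + \ve}$ with $T = B/d$ and $\ve = 1/18$; together with $d \le \sqrt{B}$ this is precisely where the exponent $1/36$ comes from. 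So you have correctly isolated the delicate issue --- possible branch points of $L(s+1,\chi)^{1/2}$ at zeros of $L$ with $1/2 < \re(w) < 1$ --- but your proposed fix forfeits the claimed power saving. To recover the lemma as stated you must either justify the continuation to $\re(s) = -1/3$ as the paper implicitly does, or settle for the unconditional error $\exp(-c\sqrt{\log B})$, which would still suffice for Theorem~\ref{Th2} (only a saving of $(\log B)^{5/2}$ beyond the main term is needed after summing over $d$) but does not prove Lemma~\ref{U'_1 af} in the form given.
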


\begin{proof}
	Let $F(s)$ denote the Dirichlet series for $U_1(B, d)$; that is
	\begin{equation*}
		F(s)
		=
		\sum_{\substack{k = 1 \\ (k, d) = 1}}^{\infty} \frac{\mu^2(k)}{\tau(k) k^{s + 1}} \chi(k)
		\prod_{p \mid k} \( 1 + \frac{1}{2p} \)^{-1}.
	\end{equation*}
	It is clear that $F(s)$ is an absolutely convergent series for $\re(s) > 0$. By Perron's formula \cite[Lemma~3.12]{Tit87} we have
	\begin{equation*}
		U_1(B, d)
		=
		\frac{1}{2 \pi i}
		\int\limits_{1 - i T}^{1 + i T} F(s)\frac{(B/9d)^{s/2}}{s} ds 
		+
		O\(\frac{1}{T} \( \frac{B}{d} \)^{\frac{1}{2}}\).
	\end{equation*}
	Observing that $F(s)$ has analytic continuation to $\re(s) > -1/2$, which is given by
	\begin{equation*}
		F(s)
		=
		\prod_{p} \(1 + \frac{\chi(p)}{(2p + 1)p^s} \) 
		\(1 - \frac{\chi(p)}{p^{s + 1}}\)^{1/2}	
		\prod_{p \mid d} \(1 + \frac{\chi(p)}{(2p + 1)p^s} \)^{-1}
		L\(s + 1, \chi\)^{1/2},
	\end{equation*}
	allows us to move the contour of integration to the left, encountering a simple pole at $s = 0$. More precisely, for some $0 < \eta < 1/2$ which will be chosen suitably later and for $i = 1, 2, 3$ we let $\gamma_i$ be the complex lines from $1 - iT$ to $-\eta - iT$, from $-\eta - iT$ to $-\eta + iT$, and from $-\eta + iT$ to $1 + iT$, respectively. Then, by the Residue theorem we have
	\begin{equation*}
		U_1(B, d)
		=
		\underset{s = 0}{\Res}\(F(s)\frac{(B/9d)^{s/2}}{s} \) + \frac{1}{2 \pi i} \sum_{i = 1}^3 
		\int\limits_{\gamma_i} F(s) \frac{(B/9d)^{s/2}}{s} ds 
		+
		O\(\frac{1}{T} \( \frac{B}{d} \)^{\frac{1}{2}}\).
	\end{equation*}

	We need to bound the integrals over each $\gamma_i$. We use the classical notation $s = \sigma + it$. In the estimation of each of the integrals we shall use the fact that when $-\eta \le \sigma \le 1$ the finite product in the analytic continuation of $F(s)$ is bounded by $\tau(d)$, the infinite product is convergent and we have the following bound for $L(s, \chi)$ (see \cite{Kol79}):
	\begin{equation*}
		|L(s, \chi)| \ll_\ve (|t| + 2)^{\frac{1}{6}(1 -\sigma) + \ve}
		\quad \mbox{when} \quad
		\frac{1}{2} \le \sigma \le 1.
	\end{equation*} 
	Therefore, if $T$ is big enough we have
	\begin{equation*}	
		\max\limits_{-\eta \le \sigma \le 1}|F(s)| 
		\ll_\ve 
		\tau(d) T^{\frac{\eta}{6} + \ve}.
	\end{equation*}
	The above implies
	\begin{equation*}
		\left| \int\limits_{\gamma_1} \right|, 
		\left| \int\limits_{\gamma_3} \right|
		\ll_\ve 
		\tau(d) T^{\frac{\eta}{6} + \ve - 1} \sqrt{\frac{B}{d}}
	\end{equation*}
	and
	\begin{equation*}
		\left| \int\limits_{\gamma_2} \right|
		\ll_{\ve, \eta} 
		\tau(d) T^{\frac{\eta}{6} + \ve} \( \frac{d}{B} \)^{\frac{\eta}{2}}
		\int\limits_{-T}^{T} \frac{1}{1 + |t|} dt
		\ll_{\ve, \eta} 
		\tau(d) T^{\frac{\eta}{6} + \ve} \log T \( \frac{d}{B} \)^{\frac{\eta}{2}}.
	\end{equation*}
	Choosing $T = \frac{B}{d}$, $\eta  = \frac{1}{3}$ and $\ve = \frac{1}{18}$ and taking into account that $d \le B^\frac{1}{2}$ gives
	\begin{equation*}
		\max_{i = 1, 2, 3}
		\left| \int\limits_{\gamma_i} \right|
		\ll
		\tau(d) B^{-\frac{1}{36}} \log B.
	\end{equation*}

	It remains to compute the residue of $F(s)\frac{(B/9d)^{s/2}}{s}$ at $s = 0$. It is easy to see that this residue is equal to $F(0)$. This concludes the proof of the lemma.
\end{proof}

Until the end of the proof of the asymptotic formula for $V_{\chi}^1(B)$ we shall fix $\chi$ to be the extension of $\(\frac{3}{\ast}\)$ mod $24$. Recall the definition of $U_1(B, d)$ in \eqref{U, U'} and the expression for $T_1(B)$ in \eqref{T_1}. We apply Lemma~\ref{U'_1 af} with partial summation to conclude that 
\begin{equation} \label{T_1 af}
	T_1(B)
	=
	\frac{1}{\(\log \frac{4B}{9}\)^{\frac{1}{2}}}
	\prod_{p} \(1 + \frac{\chi(p)}{2p + 1} \) \(1 - \frac{\chi(p) + \chi(p)^2}{p(2p + 1 + \chi(p))} \)
	+
	O \(\frac{\( \log B \)^\frac{1}{2}}{B^{\frac{1}{36}}} \).
\end{equation}	
What is left is to put together \eqref{V_chi S_chi}, \eqref{S_chi^1}, \eqref{S_chi error} and \eqref{T_1 af} with the binomial theorem to get
\begin{equation*}
	V_{\chi}^1(B)
	=
	C_1 \frac{B}{\(\log B\)^{\frac{1}{2}}}
	+ 
	O\( \frac{B \( \log\log B \)^{\frac{3}{2}}}{\log B} \),
\end{equation*}
where
\begin{equation*}
	C_1
	=
	\frac{4}{45 \pi^\frac{1}{2}}
	\prod_{p}\( 1 + \frac{1}{2p} \)\( 1 - \frac{1}{p} \)^{1/2} \(1 + \frac{\chi(p)}{2p + 1} \) \(1 - \frac{1 + \chi(p)}{p(2p + 1 + \chi(p))} \) .
\end{equation*}
The same argument allows us to establish the same asymptotic formula for $V_{\chi}^3(B)$ with $C_1$ being the constant in the leading term. It remains to combine \eqref{N_br^5 split}, \eqref{V_chi}, \eqref{V_chi^i} and \eqref{V_chi^2} with the asymptotic formulae for $V_{\chi}^1(B)$ and $V_{\chi}^3(B)$. In this way we obtain
\begin{equation*}
	N'_{\Br}(B)
	=
	\frac{C_1}{24} \cdot
	\frac{B^\frac{3}{2}}{\(\log B\)^{\frac{1}{2}}}
	+
	O\( \frac{B^\frac{3}{2} \( \log\log B \)^{\frac{3}{2}}}{\log B}\).
\end{equation*}
This concludes the proof of Theorem~\ref{Th2}.

\nocite{*}
\bibliographystyle{alpha}
\bibliography{bibliography/references}
\end{document}